\documentclass[12pt,fleqn]{amsart}

% Encoding
\usepackage[utf8]{inputenc}
\usepackage[T1]{fontenc}

% Page Layout
\usepackage{a4wide}
\usepackage{scrextend}

% Maths Packages
\usepackage{amssymb,amsthm,latexsym}
\usepackage{mathrsfs}
\usepackage{dsfont}
\usepackage[mathscr]{euscript}
\usepackage{esint}
\usepackage{newtxtext,newtxmath} 
% Colour
\usepackage[usenames,dvipsnames,x11names]{xcolor}

% Web
\usepackage{url}
\usepackage{hyperref}

% Theorem Styles and New Theorems
\newtheorem{theorem}{Theorem}[section]
\newtheorem{lemma}[theorem]{Lemma}
\newtheorem{corollary}[theorem]{Corollary}

\newtheorem{definition}[theorem]{Definition}

% Custom Theorem Counters
\newcounter{maintheorem}

\newtheorem{mainth}[maintheorem]{Theorem}
\theoremstyle{definition}
\newtheorem*{remark*}{Remark}
\newtheorem{remark}[theorem]{Remark}

\numberwithin{equation}{section}
\theoremstyle{remark}

\newtheorem{question}{Question}

% Custom Commands and Environments

\newcommand{\vertiii}[1]{{\left\vert\kern-0.25ex\left\vert\kern-0.25ex\left\vert #1 
    \right\vert\kern-0.25ex\right\vert\kern-0.25ex\right\vert}}

% Custom Lists
\newcounter{smallromans}

\newenvironment{romanenumerate}
{\begin{list}{{\normalfont\textrm{(\roman{smallromans})}}}%
  {\usecounter{smallromans}\setlength{\itemindent}{0cm}%
   \setlength{\leftmargin}{5.5ex}\setlength{\labelwidth}{5.5ex}%
   \setlength{\topsep}{.5ex}\setlength{\partopsep}{.5ex}%
   \setlength{\itemsep}{0.1ex}}}%
{\end{list}}

\newcounter{smallromansdash}

{\end{list}}

\newcounter{bigromans}

{\end{list}}

\newcommand{\e}{\varepsilon}

\title[Differential embeddings into algebras of topological stable rank 1]{Differential embeddings into algebras of\\ topological stable rank 1}

\dedicatory{In memoriam: H.~Garth~Dales (1944--2022)}
\keywords{differential subalgebra, open multiplication, Banach algebra, ultrapower, covering dimension, norm-controlled inversion}
\author[T.~Kania]{Tomasz Kania}
\address[T.~Kania]{Mathematical Institute\\Czech Academy of Sciences\\\v Zitn\'a 25 \\115 67 Praha 1\\Czech Republic  and  Institute of Mathematics and Computer Science\\ Jagiellonian University\\ {\L}ojasiewicza 6, 30-348 Krak\'{o}w, Poland
}
\email{kania@math.cas.cz, tomasz.marcin.kania@gmail.com}

\author[N.~Ma\'slany]{Natalia Ma\'slany}
\address[N.~Ma\'slany]{Mathematical Institute\\Czech Academy of Sciences\\\v Zitn\'a 25 \\115 67 Praha 1\\Czech Republic, Department of Mathematics \\ Jan Kochanowski University in Kielce \\ \.Zeromskiego 5
\\25-369 Kielce \\Poland, and  Institute of Mathematics and Computer Science  \\ Jagiellonian University\\ {\L}ojasiewicza 6, 30-348 Krak\'{o}w, Poland
}
\email{nataliamaslany97@gmail.com}
\subjclass[2010]{46A30 (primary), and 46J10 (secondary)}

\thanks{IM CAS (RVO 67985840)}

\date{\today}

\begin{document}
\begin{abstract}
    We identify a class of \emph{smooth} Banach *-algebras that are differential subalgebras of commutative C*-algebras whose openness of multiplication is completely determined by the topological stable rank of the target C*-algebra. 
    We then show that group algebras of Abelian groups of unbounded exponent fail to have uniformly open convolution. Finally, we completely characterise in the complex case (uniform) openness of multiplication in algebras of continuous functions in terms of the covering dimension.
\end{abstract}
\maketitle
\section{Introduction}Gelfand's proof of Wiener's lemma \cite{gelfand1941normierte}, which asserts that the reciprocal of a function with absolutely convergent Fourier series that does not vanish anywhere has absolutely convergent Fourier series too, was central to the development of Banach-algebraic ramifications of Harmonic Analysis. Wiener's lemma may be rephrased as follows: the algebra of absolutely convergent Fourier series is inverse-closed when embedded into the algebra of all continuous functions on the unit circle. In the present paper, we shall be concerned with algebras that have even a stronger property, namely that the norm of an invertible element is a function of the norm of the element and its supremum norm of its Gelfand transform (see Theorem~\ref{lem:diff}); a property that the algebra of absolutely convergent series lacks.\smallskip

When $A$ is a unital Banach algebra and $i\colon A\to B$ is a unital continuous injective homomorphism, we say that $A$ \emph{admits norm-controlled inversion} in $B$, whenever there exists a~function $h\colon (0, \infty)^2 \to (0, \infty)$ so that for every element $a\in A$, which is invertible in $B,$ we have 
\[
    \|a^{-1}\|_A \leqslant h(\|a\|_A, \|i(a^{-1})\|_B).
\] 
Since the embedding $i$ is injective, an invertible element $a$ in algebra $A$ remains invertible in $B$ (strictly speaking, $i(a)$ is invertible), however in this case  the norm-controlled inversion of $A$ in $B$ implies that the inverses are actually in $A$ (\emph{i.e.}, $i(A)$ is inverse-closed in $B$).\medskip

Following Nikolskii \cite{nikolski1999search}, for $\delta > 1$, we say that a Banach algebra $A$ is $\delta$-\emph{visible in }$B$, whenever 
\begin{equation}\label{eq:psi}
    \psi\big({\delta}^{-1}\big) = \sup\{\|a^{-1}\|_A \colon a\in A, \|a\|_A \leqslant 1, \|i(a^{-1})\|_B \leqslant \delta\}<\infty.
\end{equation}
Then $A$ admits norm-controlled inversion in $B$ if and only if it is $\delta$-visible in $B$ for all $\delta>1$. Should that be the case, the norm-control function $h$ can be arranged to be
\begin{equation}\label{eq:control}
    h(\|a\|_A, \|i(a^{-1})\|_B) = \frac{1}{\|a\|_A}\psi\big( {\|a\|_A\|i(a^{-1})\|_B}\big).
\end{equation}

For a commutative (*-)semi-simple Banach (*-)algebra $A$ we say, for short, that $A$ \emph{admits norm-controlled inversion}, whenever it admits norm-controlled inversion in $C(\Phi_A)$, the space of continuous functions on the maximal (*-)ideal space $\Phi_A$ of $A$, when embedded by the Gelfand transform. (For a commutative (*-)semi-simple Banach (*-)algebra the Gelfand transform is injective; see also \cite[Proposition 30.2(ii)]{doran2018characterizations}.)\medskip

{The Wiener (convolution) algebra $\ell_1(\mathbb Z)$ is a primary example of a commutative Banach *-algebra without the norm-controlled inversion in $C(\mathbb T)$, the algebra of continuous functions on the unit circle. Indeed, in \cite{nikolski1999search} Nikolskii showed that for $\delta \geqslant 2$ we have $\psi (\delta^{-1}) = \infty,$ where $\psi$ is given in \eqref{eq:psi}.} The same conclusion extends to convolution algebras $\ell_1(G)$ for any infinite Abelian group $G$ that lack norm-controlled inversion in $C(\widehat{G})$, the algebra of continuous functions on the Pontryagin-dual group to $G$, but this behaviour appears rather exceptional. On the positive side, various weighted algebras of Fourier series (see \cite{el1998estimates}) as well as algebras of Lipschitz functions on compact subsets of Euclidean spaces enjoy the norm-controlled inversion.\smallskip

Norm-controlled inversion is a consequence of smoothness of the embedding as observed by Blackadar and Cuntz \cite{blackadar1991differential}. More specifically, let $i\colon A\to B$ be a unital injective homomorphism of unital Banach algebras. (By a subalgebra of a unital algebra we shall always mean a unital subalgebra. Likewise, all homomorphisms between unital Banach algebras are assumed to preserve the unit.) Then $A$ is a~\emph{differential subalgebra} of $B$ whenever there exists $D > 0$ such that for all $a,b\in A$ we have
\begin{equation}\label{dif}
    \|ab\|_A \leqslant D(\|a\|_A\|i(b)\|_B+\|i(a)\|_B\|b\|_A).
\end{equation}
When $A$ and $B$ are Banach *-algebras, we additionally require that $i$ is *-preserving (hence it preserves the modulus); we omit the symbol $i$, when the map $i$ is clear from the context (for example, when it is the formal inclusion of algebras). Differential subalgebras (especially of C*-algebras) have been extensively studied; see, \emph{e.g.}, \cite{kissin1994differential, grochenig2013norm, grochenig2014norm, samei2019norm}. \smallskip

A unital Banach *-algebra $A$ is \emph{symmetric}, if the spectrum of positive elements is non-negative (\emph{i.e.} $\sigma (A(a^{*}a)) \subseteq [0, \infty)$ for all $a \in A$), which means that for any $a\in A$ the element $1+a^*a$ is invertible. (See \cite[Chapter 6]{doran2018characterizations} for further characterisations.) %Using Brandenburg's trick we can apply Hulanicki's Lemma to show the following lemma (see e.g. \cite[Proposition 2.2]{grochenig2013norm}) 
In the sequel, we shall make use of \cite[Theorem 1.1(i)]{grochenig2013norm} that we record below:
\begin{theorem}\label{lem:diff}
    Differential *-subalgebras of C*-algebras have norm-controlled inversion.
\end{theorem}
In particular, differential *-algebras of C*-algebras are symmetric. We note that the condition of being a differential norm is a rather mild assumption, and norms satisfying \eqref{dif} meet a \textit{weak} form of smoothness as explained in \cite[Theorem 1.1(v)]{grochenig2013norm}).\bigskip

In the present paper, we investigate the possible connections between smoothness of an embedding of Banach algebras and topological stable rank 1 (which for unital Banach algebras is equivalent to having dense invertible group) with the openness of multiplication of a given Banach algebra $A$, \emph{i.e.}, the question of for which Banach algebras the map $m\colon A\times A\to A$ given by $m(a,b)=ab$ ($a, b\in A$) is open, that is, it maps open sets to open sets. The problem of which Banach algebras have open multiplication was systematically investigated by Draga and the first-named author in \cite{DraKa}, where it was observed that unital Banach algebras with open multiplication have topological stable rank 1 but not \emph{vice versa}. For example, matrix algebras $M_n$ have topological stable rank 1 but multiplication therein is not open unless $n=1$ (\cite{behrends2017matrix}). On the other hand, the problem of openness of convolution in $\ell_1(\mathbb Z)$ is persistently \emph{open}. 
\smallskip

Various function algebras have been observed to have open multiplication (even uniformly, where a map $f\colon X\to Y$ is uniformly open whenever for every $\varepsilon > 0$ there is $\delta > 0$ such that for all $x\in X$ one has $f(B(x,\varepsilon)) \supseteq B(f(x), \delta)$): spaces of continuous/bounded functions: \cite{balcerzak2016certain, balcerzak2011multiplication, balcerzak2005multiplying, behrends2011walk, behrends2011products, botelho2019topological, komisarski2006connection, renaud2019topological} and spaces of functions of bounded variation: \cite{canarias2021multiplication, kowalczyk2019multiplication}. The first main result of the paper unifies various approaches to openness of multiplication. (All unexplained terminology may be found in the subsequent section.)

\begin{mainth}\label{thm:A}
    Suppose that $A$ is a unital Banach *-algebra such that there exists an injective *-homomorphisim $i\colon A\to C(X)$ for some compact space $X$ such that $A$ has norm-controlled inversion in $C(X)$. Let us consider either case:
    \begin{itemize}
        \item $A = C(X)$, 
        \item $A = E^*$ is a dual Banach algebra that shares with $X$ densely many points.
    \end{itemize}
    Then multiplication in $A$ is open at all pairs of jointly non-degenerate elements. 
    
    {Furthermore, suppose that $i$ has dense range in $C(X)$. If $A$ has open multiplication, then the maximal ideal space of $A$ is of dimension at most 1.}\end{mainth}
    
Theorem~\ref{thm:A} applies, in particular, to $A = C(X)$, which may be interpreted as a complex counterpart of the main result of \cite{behrends2017pointwise}.\medskip

{Since the bidual of $C(X)$ is isometric to $C(Z)$ for some compact, zero-dimensional space (in particular, $C(X)^{**}$ has uniformly open multiplication), using Lemmas \ref{l21} and \ref{lem:perm} we may record the following corollary.}
%
%\textcolor{red}{here we need to verify that we have this joint non-degeneracy everywhere}
%
{\begin{corollary}
  Suppose that $A$ is an Arens-regular Banach *-algebra that is densely embedded as a differential subalgebra of $C(X)$ for some compact space $X$. Then $A^{**}$ has open multiplication at all pairs of jointly non-degenerate elements.
\end{corollary}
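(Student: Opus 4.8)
The plan is to push the hypotheses of Lemma~\ref{l32} up to the bidual and then apply that lemma verbatim to $A^{**}$, with the enveloping von Neumann algebra $C(X)^{**}$ taking over the role of the target commutative C*-algebra. Concretely, since $C(X)$ is a commutative unital C*-algebra, $C(X)^{**}$ is again a commutative unital C*-algebra, isometrically *-isomorphic to $C(Z)$, where $Z$, its maximal ideal space, is compact and zero-dimensional. Thus the induced embedding again has an algebra of continuous functions as its target, which is precisely the setting of Lemma~\ref{l32}.

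First I would record the structural features of $A^{**}$. As a *-subalgebra of the commutative algebra $C(X)$, $A$ is commutative; together with Arens-regularity this makes the two Arens products coincide and renders $A^{**}$ a commutative Banach *-algebra, unital with the identity of $A$. Arens-regularity also makes $A^{**}$ a dual Banach algebra with predual $E=A^*$. By Lemma~\ref{lem:perm}(i), the bi-adjoint $i^{**}\colon A^{**}\to C(X)^{**}=C(Z)$ is a differential embedding, and as the second adjoint of an injective *-homomorphism it is an injective *-homomorphism; hence $A^{**}$ is a differential *-subalgebra of $C(Z)$. Symmetry of $A^{**}$ then comes for free: by Lemma~\ref{lem:diff} the embedding $i^{**}$ enjoys norm-controlled inversion, so $A^{**}$ is inverse-closed in $C(Z)$ and every self-adjoint element of $A^{**}$ shares the (real) spectrum of its continuous-function image.

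The last ingredient is that $A^{**}$ shares densely many points with $Z$, and this is exactly Lemma~\ref{l21}: under the identification of $X$ with a dense subset of $Z$, each functional $(i^{**})^*(\delta_x)=i^*(\delta_x)$ lies in $E=A^*$ and is therefore $\sigma(A^{**},A^*)$-continuous. With every hypothesis of the dual-algebra case of Lemma~\ref{l32} now verified for the triple $(A^{**},\,i^{**},\,C(Z))$, that lemma yields openness of multiplication in $A^{**}$ at all pairs of jointly non-degenerate elements, as required.

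I expect no deep obstacle here, only careful bookkeeping: the points that must be checked with some attention are that the ``differential embedding'' supplied by Lemma~\ref{lem:perm}(i) is genuinely an injective *-homomorphism (so that $A^{**}$ really is a differential *-subalgebra of $C(Z)$) and that $A^{**}$ is unital and symmetric, so that Lemmas~\ref{l21} and~\ref{l32} apply without any modification. The conceptual linchpin throughout is the identification $C(X)^{**}\cong C(Z)$, which is what allows the function-algebra argument behind Lemma~\ref{l32} to be reused one level up.
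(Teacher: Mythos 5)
Your route is exactly the paper's: identify $C(X)^{**}$ with $C(Z)$ for a compact zero\-/dimensional $Z$, transfer the differential\-/subalgebra property to $i^{**}$ via Lemma~\ref{lem:perm}(i), obtain the ``densely many shared points'' condition from Lemma~\ref{l21}, and feed everything into the dual\-/Banach\-/algebra case of Lemma~\ref{l32}. On the level of strategy there is nothing to distinguish your argument from the one the paper records.

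There is, however, one assertion in your writeup that is false as stated: the second adjoint of an injective bounded operator need not be injective. Since $\ker i^{**}$ is the annihilator in $A^{**}$ of $\overline{i^*(C(X)^*)}$, the map $i^{**}$ is injective precisely when $i^*$ has norm-dense range; this is automatic when $i$ is bounded below, but typically fails when $A$ carries a strictly finer norm than the sup-norm --- which is the whole point of a differential subalgebra. For instance, for the dense differential embedding $C^1[0,1]\hookrightarrow C[0,1]$ one computes that $\overline{i^*(M[0,1])}$ sits inside $L_1[0,1]\oplus\mathbb C$, a proper closed subspace of $\big(C^1[0,1]\big)^*\cong M[0,1]\oplus\mathbb C$, so $i^{**}$ has nontrivial kernel. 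This is not a cosmetic issue: the dual-algebra case of Lemma~\ref{l32} pulls the identities $fg=FG+H$ and $f-F=\sum_n(F_{n+1}-F_n)$ back from $C(Z)$ to the algebra ``because $i$ is injective'', and for the corollary this amounts to requiring that the functionals $i^*(\delta_x)$, $x\in X$, separate the points of $A^{**}$, \emph{i.e.}, that $\mathrm{span}\{i^*(\delta_x)\colon x\in X\}$ be norm-dense in $A^*$ --- a condition not implied by the stated hypotheses. In fairness, the paper's own one-line derivation does not address this either, so your proposal faithfully reproduces the intended argument, lacuna included; but your proposed justification (``the second adjoint of an injective *-homomorphism is injective'') is not a correct way to close it, and this is the one point of the corollary that genuinely needs an additional argument or an additional hypothesis.
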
}

The proofs of the main results of \cite{canarias2021multiplication, kowalczyk2019multiplication} centre around showing that the algebras of functions of $p$-bounded variation (for $p = 1$ and $p\in (1,\infty)$, respectively) are approximable by jointly non-degenerate products. Our theorem appears to be the first general providing sufficient conditions for openness in a given commutative Banach *-algebra (\emph{i.e.}, a self-adjoint function algebra).\medskip

In \cite[Corollary 4.13]{DraKa}, Draga and the first-named author proved that $\ell_1(\mathbb Z)$ does not have uniformly open convolution (whether it is open or not remains an open problem). We strengthen this result by showing that having unbounded exponent (that is, the condition $\sup_{g\in G} o(g) = \infty$, where $o(g)$ denotes the rank of an element $g\in G)$) is sufficient for \emph{not} having uniformly open convolution. 
\begin{mainth}\label{thm:B}
    Let $G$ be an Abelian group of unbounded exponent, i.e.,  $\sup_{g\in G} o(g) = \infty$. Then convolution in $\ell_1(G)$ is not uniformly open.
\end{mainth}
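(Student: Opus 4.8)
The plan is to argue by contradiction from the negation of uniform openness: \emph{fix} one $\varepsilon>0$ and, for \emph{every} $\delta>0$, produce a pair $(a,b)$ and a target $c$ with $\|c-ab\|_{\ell_1(G)}<\delta$ that admits no factorisation $c=a'b'$ with $\|a'-a\|,\|b'-b\|\leq\varepsilon$. The hypothesis enters through a reduction to large cyclic pieces: since $\sup_{g\in G}o(g)=\infty$, for each $N$ we may choose $g_N$ with $o(g_N)>N$. If some $g$ has infinite order we obtain an isometric unital $*$-subalgebra $\ell_1(\mathbb{Z})\hookrightarrow\ell_1(G)$ and can import the known case; otherwise we work with finite but unboundedly large $F_N:=\langle g_N\rangle\cong\mathbb{Z}_{o(g_N)}$, each giving an isometric, unital, $*$-closed copy $\ell_1(F_N)\hookrightarrow\ell_1(G)$, together with the support projection $E_N$ onto $F_N$, which is a norm-one $*$-preserving $\ell_1(F_N)$-bimodule conditional expectation. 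The base pairs and targets will be supported on $F_N$ with $N\to\infty$ as $\delta\to0$.

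The quantity that I expect to drive the whole argument is the local modulus of openness at $(a,b)$, governed to first order by the norm of a right inverse of the multiplication differential $(u,v)\mapsto bu+av$; equivalently by a norm-controlled Bézout constant for $(a,b)$ in the convolution algebra. This is exactly dual to norm-controlled inversion: by Lemma~\ref{lem:diff} differential $*$-subalgebras enjoy it, whereas $\ell_1(\mathbb{Z})$ does not, Nikolskii's computation giving $\psi(\delta^{-1})=\infty$ for $\delta\geq 2$ in \eqref{eq:psi}. The key technical input is that this failure \emph{quantified in the order} survives on the finite cyclic groups: by transferring Nikolskii's trigonometric-polynomial examples to $F_N$ (their $\mathbb{Z}_{o(g_N)}$-inverses converge coefficientwise to the $\mathbb{Z}$-inverse as $o(g_N)\to\infty$), one obtains $a_N\in\ell_1(F_N)$ with $\|a_N\|_{\ell_1}\leq 1$ and $\inf_{\chi}|\widehat{a_N}(\chi)|$ bounded below, yet $\|a_N^{-1}\|_{\ell_1}=:M_N\to\infty$. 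Thus $a_N$ is uniformly well conditioned on the Gelfand (sup-norm) side but increasingly ill conditioned in the $\ell_1$ metric, and the associated right-inverse/Bézout norms degrade with $N$.

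From such an $a_N$ one forms a base pair adapted to it and perturbs the product by a small $\eta_N$ pointed in the direction most expensive for the Bézout problem, chosen so that $\|a_N^{-1}\eta_N\|_{\ell_1}$ is comparable to $M_N\|\eta_N\|_{\ell_1}$ (a point-mass direction $\eta_N=\delta_{h}\cdot\|\eta_N\|$ realises the extremal ratio, convolution by $\delta_h$ being isometric). Expanding any putative factorisation $c=a'b'$ and isolating $a_N^{-1}\eta_N$ then forces a relation of the form $a_N^{-1}\eta_N=(\text{an }\ell_1\text{-small term})+a_N^{-1}(\text{a second-order term})$, whose two sides become irreconcilable once $N$ is large enough relative to $\varepsilon$. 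Carrying this out so that the target perturbation $\|\eta_N\|_{\ell_1}$ tends to $0$ while the required factor perturbation stays above $\varepsilon$ is what refutes uniform openness for the fixed $\varepsilon$.

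The step I expect to be the main obstacle is making this estimate \emph{exact} and \emph{global}. Testing against a single character, or applying the conditional expectation $E_N$, only controls $c$ up to an error of order $\varepsilon^2$ (the cross term $E_N(\alpha_\perp\beta_\perp)$, where $\alpha_\perp,\beta_\perp$ are the off-$F_N$ parts of the perturbations, is bounded by $\varepsilon^2$); such estimates certify only \emph{approximate} non-factorability and therefore cannot defeat small $\delta$, since over $\mathbb{C}$ the quadratic ``square-root'' rescue $u=-v=\sqrt{-\eta}$ factors point-mass perturbations with the benign uniform modulus $\delta\sim\varepsilon^2$. The decisive point is thus a genuinely multi-character (equivalently multi-scale) argument showing that reproducing $\eta_N$ \emph{exactly}—not merely up to $\varepsilon^2$—by $\varepsilon$-perturbations of the factors, \emph{using all of} $\ell_1(G)$ and not just $\ell_1(F_N)$, would cost $\ell_1$-mass exceeding $\varepsilon$ uniformly over the support of the perturbation outside $F_N$. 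It is precisely the growth $M_N\to\infty$ that supplies this surplus cost, and it is this feature that separates the failure of \emph{uniform} openness (which we prove) from ordinary openness (which may well persist: for torsion $G$ the spectrum $\widehat{G}$ is totally disconnected, so Theorem~\ref{thm:A} imposes no obstruction).
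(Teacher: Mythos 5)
Your proposal is a programme rather than a proof: the step you yourself flag as ``the main obstacle'' --- a genuinely multi-character argument showing that reproducing the perturbation $\eta_N$ \emph{exactly} by $\varepsilon$-perturbations of the factors, using all of $\ell_1(G)$, costs more than $\varepsilon$ of $\ell_1$-mass --- is never supplied, and it is precisely the content of the theorem. The surrounding first-order analysis cannot be upgraded to fill it. The Bézout/right-inverse heuristic is a linearisation, and, as you note, over $\mathbb C$ linearised obstructions are defeated by the quadratic square-root rescue; but the problem is worse than that. Each $\ell_1(F_N)$ with $F_N$ finite cyclic is a finite-dimensional commutative semisimple algebra whose Gelfand spectrum $\widehat{F_N}$ is finite, hence zero-dimensional, so each $\ell_1(F_N)$ individually \emph{does} have open multiplication; the growth of the inversion constants $M_N$ only degrades the modulus along the family and does not by itself produce a single non-factorable target in $\ell_1(G)$. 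The known obstruction to openness in these algebras is topological (covering dimension of the maximal ideal space at least $2$, equivalently failure of topological stable rank $1$), and that obstruction is invisible on any one cyclic piece. To convert ``the moduli degrade along the family'' into ``uniform openness fails'' you must aggregate infinitely many such pieces into a single algebra where the topological obstruction materialises, and your proposal contains no mechanism for doing so.

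That aggregation is exactly what the paper's proof does, by entirely soft means. From $\sup_g o(g)=\infty$ one finds, inside an iterated ultrapower $G^{\mathscr U}$, a copy of the free Abelian group $\mathbb Z^{(\mathbb R)}$ (Lemma~\ref{lem:idempotent}); freeness gives a surjection onto the divisible group $\mathbb Q^{(\mathbb N)}$, which by injectivity extends to all of $G^{\mathscr U}$, so $\widehat{\mathbb Q^{(\mathbb N)}}\cong\mathbb T^{\mathbb N}$ embeds into $\widehat{G^{\mathscr U}}$ and $\dim\widehat{G^{\mathscr U}}=\infty$. Hence $\ell_1(G^{\mathscr U})$ does not have open multiplication, and since it is a quotient of the Banach-algebra ultrapower $(\ell_1(G))^{\mathscr U}$, and uniform openness passes to ultrapowers and to such quotients, convolution in $\ell_1(G)$ cannot be uniformly open. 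If you want to pursue a direct quantitative route you would in effect have to reprove, by hand, that the equi-uniform moduli over the subalgebras $\ell_1(F_N)$ (and their perturbations inside the ambient $\ell_1(G)$) cannot be bounded below; the ultrapower formalism is the clean way to make that precise, and your current draft does not replace it.
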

By Pr\"ufer's first theorem (see \cite[p.~173]{kurosh1955theory}), every Abelian group of bounded exponent is isomorphic to a direct sum of a finite number of finite cyclic groups and a direct sum of possibly infinitely many copies of a fixed finite cyclic group, so if one seeks examples of group convolution algebras with uniform multiplication, the only candidates to be found are groups that are effectively direct sums of any number of copies of a fixed cyclic group.\medskip

Finally, we establish a complex counterpart of Komisarski's result \cite{komisarski2006connection} linking openness of multiplication in the real algebra $C(X)$ of continuous functions on a compact  space $X$ with the covering dimension of $X$. In the complex case $C(X)$ has open multiplication if and only if $X$ is zero-dimensional in which case multiplication is actually uniformly open with $\delta(\varepsilon) = \varepsilon^2 / 4$ ($\varepsilon > 0$). (See also \cite[Proposition 4.16]{DraKa} for an alternative proof using direct limits that does not depend on the scalar field; we refer to \cite{engelking1978dimension} for a modern exposition of dimension theory and standard facts thereof.)

\begin{mainth}\label{thm:c}
    Let $X$ be a compact space. Then the following conditions are equivalent for the algebra $C(X)$ of continuous complex-valued functions on $X$:
    \begin{romanenumerate}
        \item $C(X)$ has open multiplication,
        \item $C(X)$ has uniformly open multiplication,
        \item the covering dimension of $X$ is at most $1$.
    \end{romanenumerate}
    Moreover, the algebras $C(X)$ have equi-uniformly open multiplications for all compact spaces of dimension at most $1$.
\end{mainth}
A necessary condition for a unital Banach algebra to have open multiplication is topological stable rank 1, that is, having dense group of invertible elements. For a compact space $X$ of dimension at least 2, this is not the case, so $C(X)$ does not have open multiplication (\cite[Proposition 4.4]{DraKa}). The proof of Theorem~\ref{thm:c} is split into three cases.
\begin{itemize}
    \item The first one uses a reduction to spaces being topological (planar) realisations of graphs. Here we rely on certain ideas from an unpublished manuscript of Behrends for which we have permission to include them in the present note. We kindly acknowledge this crucial contribution from Professor Behrends establishing the case of $X=[0,1]$. 
    \item Then we proceed via an inverse limit argument to conclude the result for all compact metric spaces of dimension at most $1$. 
    \item Finally, we apply a result of Madre\v{s}i\'c \cite{mardevsic1960covering} to conclude the general non-metrisable case from equi-uniform openness of multiplication of $C(X)$ for all 1-dimensional compact metric spaces $X$.
\end{itemize}

\section{Preliminaries}
\subsection{Banach algebras}

Compact spaces are assumed to be Hausdorff. All Banach algebras considered in this paper are over $\mathbb C$, the field of complex scalars unless otherwise specified. We denote by $\mathbb T$ the unit circle in the complex plane.\smallskip

A Banach algebra $A$ has \emph{topological stable rank 1}, whenever invertible elements are dense in $A$ if $A$ is unital or in the unitisation of $A$ otherwise. 
%It is known that for a compact space $X$, the algebra $C(X)$ of (complex-valued) continuous functions has topological stable rank 1 if and only if the covering dimension of $X$ is at most 1. 
Algebras whose elements have zero-dimensional spectra have topological stable rank 1 and include biduals of $C(X)$ for a compact space $X$, the algebra of functions of bounded variation, or the algebra of compact operators on a Banach space; we refer to \cite[Section 2]{DraKa} for more details.

\subsubsection{Arens regularity, dual Banach algebras}
As observed by Arens \cite{arens1951adjoint}, the biudal of a~Banach algebra may be naturally endowed with two, rather than single one, multiplications (the left and right Arens products, denoted ${\scriptstyle\square}$, $\diamond$, respectively). Even though these multiplications may be explicitly defined, the following `computation' rule is perhaps easier to comprehend: for $f,g\in A^{**}$, where $A$ is a Banach algebra, by {Goldstine's} theorem, one may choose bounded nets $(f_j)$, $(g_i)$ from A that are weak* convergent to $f$ and $g$, respectively. Then
\begin{itemize}
    \item $f\, {\scriptstyle\square}\, g = \lim_j \lim_i f_j g_i$,
    \item $f \diamond g = \lim_i \lim_j f_j g_i$
\end{itemize}
are well-defined and do not depend on the choice of the approximating nets. A Banach algebra is \emph{Arens-regular} when the two multiplications coincide. For a locally compact space $X$, the algebra $C_0(X)$ is Arens-regular, but a group $G$, the group algebra $\ell_1(G)$ (see Section~\ref{sect:semigroup}) is Arens-regular if and only if $G$ is finite (\cite{young1973irregularity}).

A \emph{dual Banach algebra} is a Banach algebra $A$ that is a dual space to some Banach space $E$ whose multiplication is separately $\sigma(A,E)$-continuous. Notable examples of dual Banach algebras include von Neumann algebras, Banach algebras that are reflexive as Banach spaces, or biduals of Arens-regular Banach algebras; see \cite[Section 5]{daws2008ultrapowers} for more details. 

Suppose that $A = E^*$ is a dual Banach algebra and let $i\colon A\to C(X)$ be an injective homomorphism for some compact space $X$. We say that $A$ \emph{shares with $X$ densely many points} whenever there exists a dense set $Q\subset X$ such that $i^*(\delta_x) \in E$ ($x\in Q$), \emph{i.e.}, the functionals $i^*(\delta_x)$ ($x\in Q$) are $\sigma(A,E)$-continuous (here $\delta_x\in C(X)^*$ is the Dirac delta evaluation functional at $x\in X$). Since for an Arens-regular Banach algebra, the bidual endowed with the unique Arens product is a dual Banach algebra, we may record the following lemma.
\begin{lemma}\label{l21}
    Let $A$ be a unital Arens-regular Banach algebra and let $i\colon A\to C(X)$ be an~injective algebra homomorphism with dense range. Then $A^{**}$ shares with the maximal ideal space of $C(X)^{**}$ densely many points.
\end{lemma}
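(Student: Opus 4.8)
The plan is to take the embedding to be the second adjoint $i^{**}$ and to manufacture the required dense set of shared points out of the points of $X$ together with the additional characters supplied by the weak*-dense image of $A^{**}$. First I would fix the ambient structure. Because $i$ has dense range and $C(X)$ is commutative, injectivity of $i$ forces $A$ to be commutative, and Arens-regularity then makes $A^{**}$, equipped with the unique Arens product, a commutative unital dual Banach algebra with predual $A^{*}$ and separately weak*-continuous multiplication. As $C(X)$ is a commutative C*-algebra it is Arens-regular, so $C(X)^{**}$ is again a commutative unital C*-algebra; writing $Y$ for its maximal ideal space, the Gelfand transform identifies $C(X)^{**}$ with $C(Y)$. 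The map I would use is $j=i^{**}\colon A^{**}\to C(X)^{**}=C(Y)$: as the bidual of a homomorphism it is a unital homomorphism for the Arens products, it is weak*–weak* continuous because $i^{**}=(i^{*})^{*}$, and dense range of $i$ makes $i^{*}\colon C(X)^{*}\to A^{*}$ injective with weak*-dense range, whence $i^{**}$ has weak*-dense range; here I would record the verification that $i^{**}$ is injective, so that it is the injective homomorphism demanded by the definition.

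Next I would identify the shared points coming from $X$. The canonical inclusion $C(X)\hookrightarrow C(X)^{**}$ is dual to a continuous surjection $\pi\colon Y\to X$, and for each $x\in X$ the Dirac functional $\delta_{x}\in C(X)^{*}$, viewed through the canonical embedding $C(X)^{*}\hookrightarrow C(X)^{***}$, is a weak*-continuous character of $C(X)^{**}$; this produces a section $\kappa\colon X\to Y$ of $\pi$. Exploiting the identity $i^{***}\circ\iota_{C(X)^{*}}=\iota_{A^{*}}\circ i^{*}$ between the canonical embeddings, I would compute that $j^{*}(\delta_{\kappa(x)})=\widehat{\delta_{x}\circ i}\in A^{*}$, so that every point of $\kappa(X)$ is shared. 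More conceptually, a point $y\in Y$ is shared exactly when the character $\delta_{y}\circ i^{**}$ of $A^{**}$ is weak*-continuous, and the latter always agrees on the weak*-dense subspace $A\subseteq A^{**}$ with the weak*-continuous character induced by $\delta_{\pi(y)}\circ i\in\Phi_{A}$; thus being shared is a question of weak*-rigidity of this agreement.

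The hard part will be to produce a \emph{dense} set of shared points, and this is where I expect the real work to lie. The obvious candidates $\kappa(X)$ need not be dense: when $i$ is surjective they are precisely the atomic characters of $C(X)^{**}$, which form a proper closed subset. The point must therefore be that the \emph{proper}, weak*-dense subalgebra $i^{**}(A^{**})\subseteq C(X)^{**}$ turns many further characters of $C(X)^{**}$ into weak*-continuous characters after restriction to $A^{**}$, and that these shared points are dense. To establish this I would attempt a separation argument: show that any $G\in C(X)^{**}$ vanishing at every shared point must be $0$, using the weak*-density of $i^{**}(A^{**})$ in $C(X)^{**}$ (Goldstine together with norm-density of $i(A)$ in $C(X)$) and the separate weak*-continuity of the product on the dual Banach algebra $A^{**}$ to upgrade agreement on $A$ to agreement on the relevant characters. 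Establishing this density — equivalently, weak*-density in $Y$ of the set of $y$ for which $\delta_{y}\circ i^{**}$ is weak*-continuous — is the crux of the lemma, and is precisely the step that genuinely uses properness of the embedding rather than the mere existence of the points $\kappa(X)$.
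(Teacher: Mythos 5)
Your first two steps coincide with the paper's: the paper also passes to $j=i^{**}\colon A^{**}\to C(X)^{**}$, notes that $A^{**}$ is a dual Banach algebra with predual $A^{*}$ by Arens-regularity, and observes that for each $x\in X$ the character $\delta_{\iota(x)}=(\delta_x)^{**}$ of $C(X)^{**}$ pulls back under $i^{***}$ to $\widehat{i^{*}(\delta_x)}\in A^{*}$, so that every point of $\iota(X)$ is shared. Where you diverge is the density step, and that is exactly where your proposal stops being a proof. The paper finishes in one line by invoking the assertion (attributed to the discussion after Definition~3.3 of Dales--Lau--Strauss) that $\iota(X)$ is an \emph{open dense} subset of $\Phi_{C(X)^{**}}$; with that fact in hand, $Q=\iota(X)$ already witnesses the conclusion and no further shared points are needed. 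You instead declare that $\kappa(X)$ need not be dense and announce a separation argument that you ``would attempt''; nothing is actually proved. As submitted, your argument establishes only that $\kappa(X)$ consists of shared points, which is not the statement of the lemma. That is a genuine gap, not a stylistic difference.

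It is worth adding that your scepticism about the density of $\iota(X)$ is not idle — and that this cuts against your proposed repair as much as against the cited fact. Take $A=C[0,1]$ and $i=\mathrm{id}$. The band projection $e\in M[0,1]^{*}=C[0,1]^{**}$ onto the Lebesgue component $L^{1}(\lambda)$ is a nonzero idempotent with $\langle e,\delta_x\rangle=0$ for every $x$, so the nonempty clopen set $\{y\colon \hat e(y)=1\}$ misses $\iota(X)$ entirely; moreover, in this case one checks that the weak*-continuous characters of $C(X)^{**}$ are precisely the $\iota(x)$, so the set of shared points is \emph{exactly} $\iota(X)$ and cannot be enlarged by any separation argument. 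So either the density assertion the paper relies on is correct in a sense that neither of us is reproducing (in which case you should locate and quote it precisely, and your ``hard part'' evaporates), or the lemma itself needs a weaker formulation (e.g., density of $Q$ in the canonical copy of $X$, or a restriction on $X$). Either way, your write-up must commit to one of these resolutions and carry it out; at present the crux of the lemma is asserted to be difficult and then left undone.
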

\begin{proof}
    Since $A$ is Arens-regular, $A^{**}$ is naturally a dual Banach algebra with the unique Arens product. Since $i^{***}$ extends $i^*$, for every $x\in X$, we have $i^{***}(\delta_x) = i^*(\delta_x)\in A^*$, so that $i^*(\delta_x)$ is $\sigma(A^{**}, A^*)$-continuous. It remains to invoke the fact that $X$ can be identified with an open dense subset of the maximal ideal space of $C(X)^{**}$ via $x\mapsto (\delta_x)^{**}=\delta_{\iota(x)}$ for some point $\iota(x)$ in the maximal ideal space of $C(X)^{**}$ (see the discussion after \cite[Definition 3.3]{dales2012second}); the map $\iota$ is necessarily discontinuous unless $X$ is finite). 
\end{proof}

Let us record two permanence properties of differential embeddings; even though we shall not utilise \eqref{lem:perm:ultra} in the present paper, we keep it for possible future reference.
\begin{lemma}\label{lem:perm}
    Let $A$ be a Banach algebra continuously embedded into another Banach algebra $B$ by a homomorphism $i\colon A\to B$ as a differential subalgebra. 
    \begin{romanenumerate}
        \item Consider both in $A^{**}$ and $B^{**}$ either left or right Arens products. Then in either setting $i^{**}\colon A^{**}\to B^{**}$ is a differential embedding.
        \item\label{lem:perm:ultra} Let $\mathscr{U}$ be an ultrafilter. Then $i^{\mathscr{U}}\colon A^{\mathscr{U}}\to B^{\mathscr{U}}$ is a differential embedding between the respective ultrapowers.
    \end{romanenumerate}
\end{lemma}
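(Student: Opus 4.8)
The plan is to establish assertion~(i) by verifying the differential inequality~\eqref{dif} for $i^{**}$ directly from the iterated-limit description of the Arens products recorded above, exploiting that the bidual norm is $\sigma(A^{**},A^*)$-lower semicontinuous and that $i^{**}$ is a homomorphism for the left (respectively right) Arens product whenever $i$ is multiplicative---a standard fact about the functoriality of Arens products. Fix $f,g\in A^{**}$ and $\e>0$. The whole argument hinges on approximating $f$ and $g$ by bounded nets drawn from $A$ that \emph{simultaneously} keep the $A$-norm near $\|f\|_{A^{**}}$ (resp.\ $\|g\|_{A^{**}}$) and the $B$-norm of the image near $\|i^{**}(f)\|_{B^{**}}$ (resp.\ $\|i^{**}(g)\|_{B^{**}}$). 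Once such nets are available, the inequality passes to the limit with the \emph{same} constant $D$.

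The construction of these nets is the main obstacle, and it is where some care is needed: plain Goldstine approximation in $A$ controls only the $A$-norm, while Goldstine applied to the isometric graph $a\mapsto(a,i(a))\subset A\oplus_\infty B$ controls only the \emph{maximum} of the two norms, which is too lossy to reproduce the product structure on the right-hand side of~\eqref{dif}. I would instead prove the following joint-control statement: writing $\alpha=\|f\|_{A^{**}}+\e$ and $\beta=\|i^{**}(f)\|_{B^{**}}+\e$, the element $f$ lies in the $\sigma(A^{**},A^*)$-closure of
\[
    K=\{a\in A:\ \|a\|_A\leqslant\alpha,\ \|i(a)\|_B\leqslant\beta\}.
\]
Suppose not; then Hahn--Banach separation in the dual pair $\langle A^{**},A^*\rangle$ produces $\phi\in A^*$ with $\mathrm{Re}\langle f,\phi\rangle>\sup_{a\in K}\mathrm{Re}\langle a,\phi\rangle=:p$. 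Since $K=\alpha B_A\cap i^{-1}(\beta B_B)$ and $\alpha B_A$ has nonempty interior, the support function of the intersection is given by the exact infimal convolution $p=\inf\{\alpha\|\phi_1\|_{A^*}+\beta\|\psi\|_{B^*}:\ \phi=\phi_1+i^*\psi\}$. For every such decomposition one has $\mathrm{Re}\langle f,\phi\rangle=\mathrm{Re}\langle f,\phi_1\rangle+\mathrm{Re}\langle i^{**}(f),\psi\rangle\leqslant\|f\|_{A^{**}}\|\phi_1\|+\|i^{**}(f)\|_{B^{**}}\|\psi\|\leqslant\alpha\|\phi_1\|+\beta\|\psi\|$, and taking the infimum gives $\mathrm{Re}\langle f,\phi\rangle\leqslant p$, a contradiction. (The strict slack $\e>0$ is essential: for $\e=0$ and $\|i^{**}(f)\|_{B^{**}}=0$ injectivity of $i$ would force $K=\{0\}$.)

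Applying this to $f$ and to $g$ furnishes bounded nets $(f_s),(g_t)\subset A$ with $f_s\to f$, $g_t\to g$ weak* and all four quantities $\|f_s\|_A,\|g_t\|_A,\|i(f_s)\|_B,\|i(g_t)\|_B$ bounded by the respective norms augmented by $\e$. For all $s,t$, the inequality~\eqref{dif} in $A$ then yields $\|f_sg_t\|_A\leqslant M_\e$, where
\[
    M_\e=D\big((\|f\|_{A^{**}}+\e)(\|i^{**}(g)\|_{B^{**}}+\e)+(\|i^{**}(f)\|_{B^{**}}+\e)(\|g\|_{A^{**}}+\e)\big),
\]
a bound uniform in $s,t$. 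Because $f\mathbin{\square}g=\lim_s\lim_t f_sg_t$ and $f\diamond g=\lim_t\lim_s f_sg_t$ as iterated weak* limits, two applications of weak*-lower semicontinuity of the norm give $\|f\mathbin{\square}g\|_{A^{**}}\leqslant M_\e$ and $\|f\diamond g\|_{A^{**}}\leqslant M_\e$; letting $\e\downarrow0$ establishes~\eqref{dif} for $i^{**}$ with constant $D$ in either Arens setting. Finally $i^{**}$ is injective, since $i$ is (equivalently $i^*$ has $\sigma(A^*,A^{**})$-dense range, whence $\ker i^{**}=\{0\}$), so $i^{**}$ is a genuine differential embedding.

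For assertion~(ii) no joint-control device is needed. Represent $\mathbf a,\mathbf b\in A^{\mathscr U}$ by bounded families $(a_k)_k,(b_k)_k$ in $A$; applying~\eqref{dif} coordinatewise gives $\|a_kb_k\|_A\leqslant D(\|a_k\|_A\|i(b_k)\|_B+\|i(a_k)\|_B\|b_k\|_A)$ for each $k$. Passing to the limit along $\mathscr U$ and using that $\lim_{\mathscr U}$ is additive and multiplicative on bounded families of scalars, together with the identities $\|\mathbf a\|_{A^{\mathscr U}}=\lim_{\mathscr U}\|a_k\|_A$ and $\|i^{\mathscr U}(\mathbf a)\|_{B^{\mathscr U}}=\lim_{\mathscr U}\|i(a_k)\|_B$, yields~\eqref{dif} for $i^{\mathscr U}$ with the same constant $D$. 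Since $i^{\mathscr U}$ is patently a homomorphism, it is a differential homomorphism; it is moreover an embedding as soon as $i$ is bounded below (so that $\lim_{\mathscr U}\|i(a_k)\|_B=0$ forces $\lim_{\mathscr U}\|a_k\|_A=0$).
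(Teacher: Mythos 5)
Your argument for part (ii) is the paper's argument almost verbatim: apply \eqref{dif} coordinatewise to bounded representatives and pass to the limit along $\mathscr U$, using that $\lim_{\mathscr U}$ respects sums and products of bounded scalar families. For part (i) your skeleton is also the paper's (approximate $f,g$ by bounded nets from $A$, apply \eqref{dif} termwise, use weak*-lower semicontinuity of the bidual norm on the iterated limits), but you supply a step that the paper only asserts: the paper invokes Krein--\v{S}mulyan to get nets with $\|a_\alpha\|_A\leqslant\|a\|_{A^{**}}$ and then, in its last line, silently also uses $\|i(a_\alpha)\|_B\leqslant\|i^{**}(a)\|_{B^{**}}$ --- exactly the simultaneous control of both norms that plain Goldstine does not provide (lower semicontinuity bounds $\liminf_\alpha\|i(a_\alpha)\|_B$ from \emph{below} by $\|i^{**}(a)\|_{B^{**}}$, not from above). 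Your Hahn--Banach/infimal-convolution lemma, showing that $f$ lies in the weak* closure of $\{a\in A:\|a\|_A\leqslant\|f\|_{A^{**}}+\e,\ \|i(a)\|_B\leqslant\|i^{**}(f)\|_{B^{**}}+\e\}$, closes this gap correctly (the constraint qualification holds because $0$ is interior to $\alpha B_A$), at the price of an $\e$ that you then remove. A lighter route to the same joint control is Goldstine applied to the graph of $i$ inside $A\oplus B$ renormed by $\max\{\|a\|_A/\alpha,\ \|i(a)\|_B/\beta\}$, whose unit ball is precisely your set $K$; your objection to the graph method applies only to the unweighted $\ell_\infty$-sum.

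One genuine error: your parenthetical justification that $i^{**}$ is injective is wrong. Injectivity of $i$ yields $\sigma(A^*,A)$-density (weak*-density) of $\operatorname{ran}i^*$, whereas $\ker i^{**}=(\operatorname{ran}i^*)^{\perp}$ vanishes only if $\operatorname{ran}i^*$ is $\sigma(A^*,A^{**})$-dense, equivalently norm-dense; these are different, and $i^{**}$ can fail to be injective within the scope of the lemma. For instance, the inclusion $\ell_1\hookrightarrow c_0$ satisfies \eqref{dif} with $D=1$, yet any Banach limit $\mu\in\ell_\infty^*=\ell_1^{**}$ satisfies $i^{**}\mu=0$ since $\mu$ annihilates $\ell_1\subset\ell_\infty$. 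The paper's proof is equally silent on injectivity of $i^{**}$, so this does not concern the norm inequality, which you prove correctly (indeed more carefully than the paper); but the injectivity claim as stated should be deleted, or replaced by an additional hypothesis such as $\operatorname{ran}i^*$ being norm-dense in $A^*$.
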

\begin{proof}\emph{Case 1.} 
    Let $\{a_{\alpha}\}, \{b_{\beta}\} \subset A$ be bounded nets $\sigma(A^{**}, A^*)$-convergent to $a,b \in A^{**}$ respectively, satisfying for any $\alpha, \beta$ conditions $\|a_{\alpha}\|_A \leqslant \|a\|_{A^{**}}$ and $\|b_{\beta}\|_B \leqslant \|b\|_{B^{**}}$ (it is possible by the Goldstine and Krein--\v{S}mulyan theorems). Then
    \begin{equation*}
        \begin{split}
            \|ab\|_{A^{**}} &\leqslant \liminf\limits_{\alpha, \beta} \|a_{\alpha} b_{\beta}\| \\
            &\leqslant D \cdot \liminf\limits_{\alpha, \beta} \, \big(\|a_{\alpha}\|_A\|i(b_{\beta})\|_B+\|i(a_{\alpha})\|_B\|b_{\beta}\|_A\big) \\
            &\leqslant D \big(\|a\|_{A^{**}}\|i^{**}(b)\|_{B^{**}}+\|i^{**}(a)\|_{B^{**}}\|b\|_{A^{**}}\big).
        \end{split}
    \end{equation*}
 \emph{Case 2.} 
    Let $a=[(a_{\gamma})_{\gamma \in \Gamma}]$, $b=[(b_{\gamma})_{\gamma \in \Gamma}] \in A^{\mathscr{U}}.$ Then
    \begin{equation*}
        \begin{split}
            \|a b \|_{A^{{\mathscr{U}}}} &= \lim_{\gamma, \mathscr{U}}\big\|a_{\gamma} b_{\gamma} \big\|_A \\
            &\leqslant \lim_{\gamma, \mathscr{U}} D \Big(\big\|a_{\gamma}\big\|_A\big\|i(b_{\gamma})\big\|_B+\big\|i(a_{\gamma})\big\|_B\big\|b_{\gamma}\big\|_A\Big) \\
            &\leqslant  D \Big( \lim_{\gamma, \mathscr{U}} \big\|a_{\gamma}\big\|_A \cdot \lim_{\gamma, \mathscr{U}}\|i(b_{\gamma})\big\|_B+\lim_{\gamma, \mathscr{U}}\big\|i(a_{\gamma})\big\|_B\cdot \lim_{\gamma, \mathscr{U}}\big\|b_{\gamma}\big\|_A\Big) \\
            &= D \Big(\big\|a\big\|_{A^{{\mathscr{U}}}}\big\|i^{{\mathscr{U}}}\big(b\big)\big\|_{B^{{\mathscr{U}}}}+\big\|i^{{\mathscr{{\mathscr{U}}}}}\big(a\big)\big\|_{B^{{\mathscr{U}}}}\big\|b\big\|_{A^{{\mathscr{U}}}}\Big).
        \end{split}
    \end{equation*} 
\end{proof}

\subsection{Banach *-algebras} Let $A$ be a unital Banach *-algebra. In this setting, for $a\in A$ we interpret $|a|^2$ as $a^*a$. We say that elements $a,b$ in $A$ are \emph{jointly non-degenerate}, when $|a|^2+|b|^2$ is invertible. When $X$ is a compact space and $a,b \in C(X)$, we sometimes say that elements with $|a|^2+|b|^2 \geqslant \eta$ (for some $\eta > 0$) are \emph{jointly $\eta$-non-degenerate}. Let us introduce the following definition.
\begin{definition}\label{df23}
    A unital Banach *-algebra $A$ is \emph{approximable by jointly non-degenerate products} whenever for all $a,b\in A$ and $\varepsilon > 0$ there exist jointly non-degenerate elements $a^\prime, b^\prime\in A$ with $\max \{ \|a-a^\prime\|, \|b-b^\prime\| \}<\varepsilon$ such that $ab = a^\prime b^\prime$.  
\end{definition}
\begin{remark}It is readily seen that $C(X)$ for a zero-dimensional compact space $X$ has this property. Indeed, let $f,g\in C(X)$ and $\varepsilon> 0$. Consider the sets
\begin{itemize}
    \item $D_1 = \{x\in X\colon |f(x)|\geqslant \varepsilon/ 3\}$
    \item $D_2 =  \{x\in X\colon |g(x)|\geqslant \varepsilon/ 3\}$
    \item $D_3 = \{x\in X\colon |f(x)|, |g(x)| \leqslant \varepsilon/2\}$.
\end{itemize}
Certainly, the sets $D_1, D_2, D_3$ are closed and cover the space $X$. As $X$ is zero-dimensional, there exist pairwise clopen sets $D_1^\prime \subseteq D_1$, $D_2^\prime \subseteq D_2$, and $D_3^\prime \subseteq D_3$ that still cover $X$, \emph{i.e.}, $X = D_1^\prime \cup D_2^\prime \cup D_3^\prime$. Let $f^\prime = f \cdot \mathds{1}_{D_1^\prime\cup D_2^\prime} + \tfrac{\varepsilon}{2}\mathds{1}_{D_3^\prime}$ and $g^\prime = g \cdot \mathds{1}_{D_1^\prime\cup D_2^\prime} + \tfrac{2}{\varepsilon}fg\mathds{1}_{D_3^\prime}$. Then $f^\prime$, $g^\prime$ are the sought jointly non-degenerate approximants. On the other hand, as $C(X)$ for $X = [0,1]$ and compact spaces alike are readily not approximable by jointly non-degenerate issues due to connectedness.
\end{remark}

Kowalczyk and Turowska \cite{kowalczyk2019multiplication} showed that the algebra $BV[0,1]$ of functions of bounded variation on the unit interval is approximable by jointly non-degenerate products and Canarias, Karlovich, and Shargorodsky \cite{canarias2021multiplication} extended this result to algebras of bounded $p$-variation on the interval as well as certain further function algebras.

\subsection{Ultraproducts} Ultraproducts of mathematical structures usually come in two main guises: the algebraic one (first-order) and the analytic one (second-order). Let us briefly summarise the link between these in the context of groups and their group algebras. This has been essentially developed by Daws in \cite[Section 5.4]{daws2009amenability} and further explained in \cite[Section 2.3.2]{DraKa}.\smallskip

Let $(S_\gamma)_{\gamma\in \Gamma}$ be an infinite collection of semigroups and let $\mathscr U$ be an ultrafilter on $\Gamma$. The (algebraic) \emph{ultraproduct} $\prod_{\gamma\in \Gamma}^{\mathscr{U}}S_\gamma$ with respect to $\mathscr{U}$ (denoted $S^{\mathscr U}$ when $S_\gamma=S$ for all $\gamma\in \Gamma$ and then termed the \emph{ultrapower} of $S$ with respect to $\mathscr{U}$) is the quotient of the direct product $\prod_{\gamma\in \Gamma}S_\gamma$ by the congruence 
\[
    (g_\gamma)_{\gamma\in \Gamma} \sim (h_\gamma)_{\gamma\in \Gamma}\quad\text{ if and only if }\quad\{\gamma\in \Gamma\colon g_\gamma = h_\gamma\}\in \mathscr{U}.
\]
Then the just-defined ultraproduct is naturally a semigroup/group/Abelian group if $S_\gamma$ are semigroups/groups/Abelian groups for $\gamma\in \Gamma$.\smallskip

Let $(A_\gamma)_{\gamma\in \Gamma}$ be an infinite collection of Banach spaces. Then the $\ell_\infty(\Gamma)$-direct sum $A = ( \bigoplus_{\gamma\in \Gamma} A_\gamma)_{\ell_\infty(\Gamma)}$, that is, the space of all tuples $(x_\gamma)_{\gamma\in \Gamma}$ with $x_\gamma\in A_\gamma$ ($\gamma\in \Gamma$) and $\sup_{\gamma\in \Gamma}\|x_\gamma\|<\infty$ is a Banach space under the supremum norm. Moreover, the subspace $J=c_0^{\mathscr U}(A_\gamma)_{\gamma\in \Gamma}$ comprising all tuples $(x_\gamma)_{\gamma\in \Gamma}$ such that $\lim_{\gamma\to \mathscr{U}}\|x_\gamma\|=0$ is closed. The (Banach-space) \emph{ultraproduct} $\prod_{\gamma\in \Gamma}^{\mathscr U}A_\gamma$ of $(A_\gamma)_{\gamma\in \Gamma}$ with respect to $\mathscr{U}$ is the quotient space $A / J$. If $A_\gamma$ ($\gamma\in \Gamma$) are Banach algebras, then naturally so is $A$ and $J$ is then a closed ideal therein. Consequently, the ultraproduct is a Banach algebra. Let us record formally a link between these two constructions.

\begin{lemma}
    Let $(S_\gamma)_{\gamma\in \Gamma}$ be an infinite collection of semigroups and let $\mathscr{U}$ be a countably incomplete ultrafilter on $\Gamma$. Then there exists a unique contractive homomorphism
    \begin{equation}\label{homomorphismG}
        \iota \colon {\prod_{\gamma\in \Gamma}}^{\mathscr{U}}\ell_1(S_\gamma)\to \ell_1\Big({\prod_{\gamma\in \Gamma}}^{\mathscr{U}}S_\gamma \Big) 
    \end{equation}
    that satisfies 
    \[
        \iota\Big(\big[(e_{g_\gamma})_{\gamma\in\Gamma}\big]\Big) = e_{[(g_\gamma)_{\gamma\in\Gamma}]}\qquad \Big(\big[(e_{g_\gamma})_{\gamma\in\Gamma}\big]\in {\prod_{\gamma\in \Gamma}}^{\mathscr U}\ell_1(S_\gamma)\Big).
    \]
\end{lemma}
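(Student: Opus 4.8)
The plan is to construct the homomorphism $\iota$ explicitly on the dense subalgebra generated by point masses and then verify it extends continuously. First I would describe the target: an element of the semigroup ultraproduct $\prod^{\mathscr U}_\gamma S_\gamma$ is an equivalence class $[(g_\gamma)_\gamma]$, and its point mass $e_{[(g_\gamma)_\gamma]}$ lives in $\ell_1(\prod^{\mathscr U}_\gamma S_\gamma)$. On the source side, a canonical dense subspace of the Banach-space ultraproduct $\prod^{\mathscr U}_\gamma \ell_1(S_\gamma)$ is spanned by classes of the form $[(e_{g_\gamma})_\gamma]$. The natural recipe is to set $\iota([(e_{g_\gamma})_\gamma]) = e_{[(g_\gamma)_\gamma]}$ as prescribed, extend linearly to finite linear combinations, and then show this map is well-defined (independent of representatives), contractive, multiplicative, and uniquely determined.

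\medskip
The key steps, in order, are as follows. First I would check that the assignment on point masses respects the two equivalence relations: if $\{\gamma : g_\gamma = h_\gamma\}\in\mathscr U$ then $[(e_{g_\gamma})_\gamma]$ and $[(e_{h_\gamma})_\gamma]$ agree in the Banach-space ultraproduct (their difference has norm tending to $0$ along $\mathscr U$, in fact is eventually $0$), and simultaneously $[(g_\gamma)_\gamma]=[(h_\gamma)_\gamma]$ in the semigroup ultraproduct, so both sides of the defining formula are unchanged. This makes $\iota$ well-defined on the linear span of such point-mass classes. Second, I would establish contractivity on this dense subspace: for a finite sum $\sum_k c_k [(e_{g^{(k)}_\gamma})_\gamma]$ one computes the $\ell_1(\prod^{\mathscr U}S_\gamma)$-norm of its image as $\sum_{[h]} \big|\sum_{k : [g^{(k)}]=[h]} c_k\big|$, and compares this against the ultraproduct norm $\lim_{\gamma\to\mathscr U}\|\sum_k c_k e_{g^{(k)}_\gamma}\|_{\ell_1(S_\gamma)}$; using that distinct classes $[g^{(k)}]\ne[g^{(l)}]$ force $g^{(k)}_\gamma\ne g^{(l)}_\gamma$ for $\mathscr U$-almost all $\gamma$, the coordinate-wise $\ell_1$ norms separate the groups appropriately, yielding $\|\iota(x)\|\leq \|x\|$. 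Third, multiplicativity follows by checking it on products of point masses, where $e_g \cdot e_h = e_{gh}$ in each $\ell_1(S_\gamma)$ and the semigroup multiplication in the ultraproduct satisfies $[(g_\gamma)_\gamma]\cdot[(h_\gamma)_\gamma] = [(g_\gamma h_\gamma)_\gamma]$, so both sides match. Finally, contractivity lets me extend $\iota$ uniquely to the whole ultraproduct, and density of the point-mass span forces uniqueness of any contractive homomorphism satisfying the stated formula.

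\medskip
The step I expect to be the main obstacle is the contractivity estimate, and more specifically verifying that the coordinate-wise norms interact correctly with the ultralimit. The subtlety is that two sequences $(g^{(k)}_\gamma)_\gamma$ and $(g^{(l)}_\gamma)_\gamma$ may coincide on a small (non-$\mathscr U$) set of indices, causing partial cancellation in individual coordinates $\ell_1(S_\gamma)$ that does \emph{not} occur in the ultraproduct; this is precisely where countable incompleteness of $\mathscr U$ is used, to arrange a partition of $\Gamma$ witnessing that all the finitely many relevant classes are pairwise distinct on a single set in $\mathscr U$. On that set the point masses $e_{g^{(k)}_\gamma}$ are genuinely distinct basis vectors, so $\|\sum_k c_k e_{g^{(k)}_\gamma}\|_{\ell_1(S_\gamma)} = \sum_k |c_k|$ for $\mathscr U$-almost all $\gamma$, and the ultralimit computes exactly the target norm $\sum_{[h]}|\sum_{[g^{(k)}]=[h]} c_k|$ after grouping coincident classes. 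Handling this grouping-and-separation carefully is the crux; once it is in place the remaining verifications are routine.
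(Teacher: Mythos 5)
The paper itself offers no proof of this lemma --- it is recorded as a known fact with a pointer to Daws and to \cite[Section 2.3.2]{DraKa} --- so your proposal must be judged on its own merits, and it has a genuine gap at its foundation. Your entire strategy rests on the claim that the linear span of the point-mass classes $[(e_{g_\gamma})_{\gamma}]$ is dense in the Banach-space ultraproduct $\prod_{\gamma}^{\mathscr U}\ell_1(S_\gamma)$. This is false whenever the $S_\gamma$ are infinite (the only case of interest here). Indeed, a finite combination $\sum_{k=1}^N c_k[(e_{g^{(k)}_\gamma})_\gamma]$ is the class of a sequence of vectors each supported on at most $N$ points of $S_\gamma$; taking $\Gamma=\mathbb N$, $S_\gamma=\mathbb N$ and $x_\gamma=\gamma^{-1}\sum_{n=1}^{\gamma}e_n$, one gets $\|x_\gamma-\sum_k c_k e_{g^{(k)}_\gamma}\|_1\geq(\gamma-N)/\gamma\to 1$ along any non-principal $\mathscr U$, so $[(x_\gamma)_\gamma]$ has distance $1$ from the whole span. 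Consequently your ``extend by continuity from a dense subalgebra'' step does not produce a map on the full ultraproduct, and it certainly cannot deliver the uniqueness assertion. The standard construction (Daws) instead defines $\iota$ globally: for $x=[(x_\gamma)_\gamma]$ and $t=[(t_\gamma)_\gamma]$ one sets the $e_t$-coefficient of $\iota(x)$ to be $\lim_{\gamma\to\mathscr U}x_\gamma(t_\gamma)$ (well defined since two representatives of $t$ agree $\mathscr U$-a.e.), and then proves $\ell_1$-summability and contractivity by testing against finite sets of pairwise distinct classes exactly as in your separation argument.

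Two smaller points. First, your separation step --- finitely many pairwise distinct classes have pairwise distinct representatives on a single set of $\mathscr U$ --- needs only that $\mathscr U$ is an ultrafilter closed under finite intersections; it has nothing to do with countable incompleteness, so your stated use of that hypothesis is misplaced. Second, the isometry computation you carry out on the span of point masses is correct as far as it goes (the source and target norms both reduce to $\sum_{[h]}\bigl|\sum_{k\colon[g^{(k)}]=[h]}c_k\bigr|$ after grouping), and the multiplicativity check on point masses is fine; these ingredients survive in the correct proof, but only as the verification that the globally defined $\iota$ restricts to the prescribed formula, not as a definition of $\iota$ itself.
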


\subsection{Abelian groups} Let $G$ be a group. For $g\in G$ we denote by $o(g)$ the order of the element $g$. For a (locally compact) Abelian group $G$ we denote by $\widehat{G}$ the Pontryagin dual group of $G$; for details and basic properties concerning this duality we refer to \cite[Chapter 6]{hewitt2012abstract}.
 
If $G$ is an (Abelian) divisible group, that is, for any $g\in G$ and $n\in \mathbb N$ there is $h\in G$ such that $g = nh$, then $G$ is an injective object in the category Abelian groups, which means that for any Abelian groups $H_1 \subset H_2$, every homomorphism $\varphi\colon H_1\to G$ extends to a homomorphism $\overline{\varphi}\colon H_2\to G$. Direct sums of arbitrary many copies of $\mathbb Q$, the additive group of rationals, are divisible. \smallskip

Let us record for the future reference the following observation, likely well known to alge\-bra\-ically-orien\-ted model theorists.

\begin{lemma}\label{lem:idempotent}
    Suppose that $G$ is an Abelian group with $\sup_{g\in G} o(g) = \infty$. Then $\mathbb Z^{(\mathbb R)}$ embeds into an ultrapower of $G$ with respect to an ultrafilter on a countable set.
\end{lemma}

\begin{proof}
Let $\mathscr{U}$ be a non-principal ultrafilter on $\mathbb N$ and let $(g_n)_{n=1}^\infty$ be a sequence in $G$ such that $\sup_{n}o(g_n) = \infty$. Then $g = [(g_n)_{n=1}^\infty ]$ has infinite order in $H = G^{\mathscr{U}}$. Let $\mathscr{A}$ be an~almost disjoint family of infinite subsets of $\mathbb N$ that has cardinality continuum. Then all but at most one elements $\mathscr{A}$ are not in $\mathscr{U}$ (as $\mathscr{U}$ is non-principal and closed under finite intersections), so let us assume that $\mathscr{A} \subset \mathscr{U}^\prime$. For each $A\in \mathscr{A}$ we set
\[
    g_A(i) = 
                \left\{\begin{array}{ll}
                    g, & i \notin A, \\ 
                    0, & i \in A
                         \end{array}
                            \quad (i\in \mathbb N). \right.
\]
Then $h_A = [(g_A(i))_{i=1}^\infty] \in H^{\mathscr{U}}$ and $o(h_A) = \infty$ ($A\in \mathscr{A}$). Moreover, $\{h_A\colon A\in \mathscr A\}$ is a~$\mathbb Z$-linearly independent set of cardinality continuum. As such, the subgroup it generates is isomorphic to $\mathbb Z^{(\mathbb R)}$. It remains to notice that canonically $(G^{\mathscr{U}})^{\mathscr U} \cong G^{\mathscr{U} \otimes \mathscr{U}}$, as required.
\end{proof}

\subsection{Semigroup algebras}\label{sect:semigroup}
Let $S$ be a semigroup written multiplicatively. In the Banach space $\ell_1(S)$ one can define a convolution product by
\[
    x\ast y =\sum_{t\in S} \Big(\sum_{r\cdot s=t}x_r y_s\Big)e_t \quad (x = (x_s)_{s\in S},\; y = (y_s)_{s\in S}\in \ell_1(S)),
\]
where $(e_s)_{s\in S}$ is the canonical unit vector basis of $\ell_1(S)$, together with $\ell_1(S)$ becomes a~Banach algebra. For the additive semigroup of natural numbers, the convolution in $\ell_1(\mathbb N)$ renders the familiar Cauchy product.

Suppose that $T\subseteq S$ is a subsemigroup. Then $\ell_1(T)$ is naturally a closed subspace of $\ell_1(S)$, which is moreover a closed subalgebra. Every surjective semigroup homomorphism $\vartheta\colon T\to S$ implements a surjective homomorphism $\iota_\vartheta\colon \ell_1(T)\to \ell_1(S)$ on the Banach-algebra level by the action 
\begin{equation}\label{homomorphismtheta}
    \iota_\vartheta e_t = e_{\vartheta(t)}\quad (t\in T).
\end{equation} 
When $G$ is an Abelian (discrete) group, then the (compact) dual group $\widehat{G}$ is the maximal ideal space of the convolution algebra $\ell_1(G)$. More information on semigroup algebras may be found in \cite[Chapter 4]{dales2012second}. 

\section{Proofs of Theorems A and B}
The proof of Theorem~\ref{thm:A} relies on constructing certain approximation scheme for pairs of elements, an idea that shares similarities with the methods used in the main results of \cite{kowalczyk2019multiplication} and \cite{canarias2021multiplication}. However, the results achieved in Theorem~\ref{thm:A} are more general, and the techniques applied in its proof are correspondingly broader. Let us now proceed to the proof.

\begin{proof}[Proof of Theorem~\ref{thm:A}]
Suppose that $A$ has norm-controlled inversion implemented by a *-ho\-mo\-mor\-phism $i\colon A\to C(X)$. Since *-homomorphism of Banach *-algebra into a C*-algebra is always norm decreasing, we have
%By Gelfand--Naimark theorem there is a constant $C\geqslant 1$ such that for all $f\in A$,
\begin{equation}\label{eq:local-0}
    \|i(f)\|_\infty \leqslant \|f\|_A\quad (f\in A).
\end{equation}
Suppose that $F,G\in A$ are jointly non-degenerate (in particular, $|F| + |G|$ is invertible in $C(X)$ being nowhere zero). Fix $\e\in(0,1)$ and let
\begin{equation}\label{eq:local-1}
    \gamma:=\min\left\{1,\frac{1}{2}\inf_{x\in X}
    \big((|i(F))(x)|+|(i(G))(x)|\big)\right\}
\end{equation}
Set
\begin{equation}\label{eq:local-2}
    K:=2\cdot\max\big\{\|F\|_A,\|G\|_A,1\big\},
\end{equation}
\begin{equation}
    \widehat{T}:= \frac{2C}{{\gamma}^2} \cdot \psi\bigg( \frac{4K^2}{{\gamma}^2}\bigg) > 0,
\end{equation}
where the function $\psi$ satisfies \eqref{eq:psi}. Moreover, let $  T:= \max \{\widehat{T}, 1\}.$ Pick an arbitrary element $H\in A$ so that
\begin{equation}\label{eq:local-3}
    \|H\|_{A}<\frac{\e \cdot \gamma}{C K^3 T^2}
\end{equation}
and consider
\begin{equation}\label{eq:local-4}
F_0:=F,\quad G_0:=G,\quad H_0:=H
\end{equation}
We then define recursively the sequences $(F_n)_{n=0}^\infty$, $(G_n)_{n=0}^\infty$, and
$(H_n)_{n=0}^\infty$ by
\begin{equation}\label{eq:local-5}
    F_{n+1} := F_n+\frac{H_n\overline{G_n}}{|F_n|^2+|G_n|^2}, G_{n+1} := G_n+\frac{H_n\overline{F_n}}{|F_n|^2+|G_n|^2}, H_{n+1} :=-\frac{H_n^2\overline{F_n G_n}}{(|F_n|^2+|G_n|^2)^2}.
\end{equation}

We \emph{claim} that
\begin{romanenumerate}
    \item \label{eq:FGHi}
    \[
        F_nG_n+H_n=FG+H\quad (n = 0, 1, 2, \ldots),
    \]
    \item \label{eq:FGHii}
    \[
        \|F_n\|_{A}, \|G_n\|_{A}\leqslant \tfrac{1}{2}K+1-2^{-n}<K,
    \]
    \item \label{eq:FGHiii}
    \[
        \inf_{x\in X}\big(|(i(F_n))(x)|+|(i(G_n))(x)|\big)\geqslant \gamma +\gamma\cdot 2^{-n}>0,
    \]
    \item \label{eq:FGHiv}
    \[
        \|H_n\|_{A}\leqslant \frac{1}{2^n}\cdot\frac{\e \cdot \gamma}{C K^3 T^2}.
    \]
\end{romanenumerate}
Note that \eqref{eq:FGHiii} implies that sequences \eqref{eq:local-5} are well defined. We will prove these claims by induction. 

It follows from 
\eqref{eq:local-4} that $F_0G_0+H_0=FG+H.$ We obtain from \eqref{eq:local-1}--\eqref{eq:local-4} that
\begin{itemize}
    \item $ \|F_0\|_{A}=\|F\|_{A}\leqslant K/2$,
    \item $\|G_0\|_{A}=\|G\|_{A}\leqslant K/2$, 
    \item $\|H_0\|_{A}=\|H\|_{A}<\frac{\e \cdot \gamma}{CK^3T^2}$,
    \item $\inf_{x\in X}\big(|F_0(x)|+|G_0(x)|\big) =
            \inf_{x\in X}\big(|F(x)|+|G(x)|\big)\geqslant 2\gamma >0.$
\end{itemize}
That is, \eqref{eq:FGHi}--\eqref{eq:FGHiv} are satisfied for $n=0$.

Now we assume that \eqref{eq:FGHi}--\eqref{eq:FGHiv} are fulfilled for some $n = 0, 1,2, \ldots $ Consequently, sequences \eqref{eq:local-5} are well defined. 
Then, taking into account \eqref{eq:local-2}, we see that $K/2\geqslant 1$ and
\begin{equation}\label{eq:local-8}
F_nG_n+H_n=FG+H,
\end{equation}
\begin{equation}\label{eq:local-9}
\|F_n\|_{A}\leqslant \frac{K}{2}+1-2^{-n}<K,
\end{equation}
\begin{equation}\label{eq:local-10}
\|G_n\|_{A}\leqslant \frac{K}{2}+1-2^{-n}<K,
\end{equation}
\begin{equation}\label{eq:local-11}
\inf_{x\in X}\big(|(i(F_n))(x)|+|(i(G_n))(x)|\big)\geqslant \gamma+\gamma\cdot 2^{-n}>\gamma,
\end{equation}
\begin{equation}\label{eq:local-12}
\|H_n\|_{A}\leqslant \e\cdot 2^{-n}\cdot\frac{\gamma}{CK^3T^2}.
\end{equation}
Let us show that \eqref{eq:FGHi}--\eqref{eq:FGHiv} are fulfilled for $n+1$.

For \eqref{eq:FGHi}, it follows from \eqref{eq:local-5}--\eqref{eq:local-8} that
\[
    \begin{split}
    F_{n+1}G_{n+1}+H_{n+1}
    &=
    \left(F_n+\frac{H_n\cdot\overline{G_n}}{|F_n|^2+|G_n|^2}\right)
    \left(G_n+\frac{H_n\cdot\overline{F_n}}{|F_n|^2+|G_n|^2}\right)
    -
    \frac{H_n^2\cdot\overline{F_nG_n}}{(|F_n|^2+|G_n|^2)^2}
    \\
    &= 
    F_nG_n+H_n\frac{F_n\overline{F_n}+G_n\overline{G_n}}{|F_n|^2+|G_n|^2}
    +
    H_n^2\frac{\overline{F_nG_n}}{(|F_n|^2+|G_n|^2)^2}
    -
    H_n^2\frac{\overline{F_nG_n}}{(|F_n|^2+|G_n|^2)^2}
    \\
    &= 
    F_nG_n+H_n=FG+H.
    \end{split}
\]
Hence, \eqref{eq:FGHi} is satisfied for $n+1$.

As for \eqref{eq:FGHii}, using \eqref{eq:local-9}--\eqref{eq:local-10} we conclude that
\begin{equation}\label{k}
    \begin{array}{lcl}
        \| |F_n|^2+|G_n|^2\|_{A}
        &\leqslant &
        \|F_n\cdot \overline{F_n}\|_{A}
        +
        \|G_n\cdot\overline{G_n}\|_{A} \\
        &\leqslant &
        \| F_n\|_{A}
        \|\overline{F_n}\|_{A}
        +
        \|G_n\|_{A}\|\overline{G_n}\|_{A}
        \\
        &= &
        \|F_n\|_{A}^2 +\|G_n\|_{A}^2\\
        &\leqslant &
        2K^2.
    \end{array}
\end{equation}\label{eq:local-13}
It follows from \eqref{eq:local-11} that
\[
    \begin{array}{lcl}
        \gamma^2 &
        \leqslant &
        \inf\limits_{x\in X}\big(|(i(F_n))(x)|+|(i(G_n))(x)|\big)^2\\
        & = & \inf\limits_{x\in X} ( |(i(F_n))(x)|^2+2|(i(F_n))(x)|\cdot|(i(G_n))(x)|+|(i(G_n))(x)|^2) \\
        & \leqslant &
        2\inf\limits_{x\in X}\big(|(i(F_n))(x)|^2+|(i(G_n))(x)|^2\big),
    \end{array}
\]
hence
\begin{equation}\label{eq:local-14}
    \sup_{x\in X}\big(|(i(F_n))(x)|^2+|(i(G_n))(x)|^2\big)\geqslant  \inf_{x\in X}\big(|(i(F_n))(x)|^2+|(i(G_n))(x)|^2\big)\geqslant \frac{\gamma^2}{2}>0.
\end{equation}
By \eqref{eq:local-0} and \eqref{eq:local-14}  we obtain
\begin{equation}\label{eq:local-144}
    \| |F_n|^2+|G_n|^2\|_{A} \geqslant \frac{1}{C} \cdot \frac{\gamma^2}{2}>0.
\end{equation}
It then follows from \eqref{eq:local-5}, \eqref{eq:local-9}--\eqref{eq:local-10} and  \eqref{eq:local-14} that
\begin{equation}\label{eq:local-15}
    \begin{split}
        \|F_{n+1}\|_{A}
        &\leqslant 
        \|F_n\|_{A}+\|H_n\|_{A}\|G_n\|_{A}
        \left\|\frac{1}{|F_n|^2+|G_n|^2}\right\|_{A}\\
        &\leqslant 
        \left(\frac{K}{2}+1-2^{-n}\right)+\|H_n\|_{A}K
        \left\|\frac{1}{|F_n|^2+|G_n|^2}\right\|_{A}.
    \end{split}
\end{equation}
Since $A$ admits norm-controlled inversion in $C(X)$, we follows from \eqref{k}, \eqref{eq:local-14}, \eqref{eq:local-144}  that
\begin{equation}\label{eq:local-16}
    \begin{split}
        \left\|\frac{1}{|F_n|^2+|G_n|^2}\right\|_A 
        &\leqslant \frac{1}{\||F_n|^2+|G_n|^2\|_A}
        \cdot \psi\Big({\||F_n|^2+|G_n|^2\|_A
        \cdot \big\|\big(|F_n|^2+|G_n|^2\big)^{-1}\big\|_{\infty}}\Big) \\
        &\leqslant \frac{2C}{{\gamma}^2}
        \cdot \psi\bigg({2K^2
        \cdot \frac{2}{\gamma^2}}\bigg)= \widehat{T}.
    \end{split}
\end{equation}
Combining \eqref{eq:local-15}--\eqref{eq:local-16} with \eqref{eq:local-12}
and taking into account that $\e \in(0,1),$  $\gamma \in(0,1],$ $ K \geqslant 2,$ $C\geqslant 1$, and $T\geqslant 1$, we obtain
\begin{equation}\label{eq:local-17}
    \begin{array}{lcl}
        \|F_{n+1}\|_{A}, \|G_{n+1}\|_{A}
        &\leqslant &
        \frac{K}{2}+1-2^{-n}+ K \widehat{T} \cdot\e\cdot 2^{-n}\cdot\frac{\gamma}{CK^3T^2}\\
        &\leq&
        \frac{K}{2}+1-2^{-n}+2^{-n} \cdot \frac{1}{2}\\
        & = & \frac{K}{2}+1-2^{-n-1}.
    \end{array}
\end{equation}
Thus, \eqref{eq:FGHii} is fulfilled for $n+1$.

In order to verify \eqref{eq:FGHiii}, since $\e \in(0,1),$  $\gamma \in(0,1],$ $K \geqslant 2,$ $C\geqslant 1$, and $T\geqslant 1$,
it follows from \eqref{eq:local-5}, \eqref{eq:local-0},
\eqref{eq:local-10}, \eqref{eq:local-12} and \eqref{eq:local-16} that for $x\in X$ we have
\begin{equation*}
    \begin{split}
        |(i(F_n))(x)|
        &\leq
        |(i(F_{n+1}))(x)|+|(i(H_n))(x)|\frac{|(i(G_n))(x)|}{|(i(F_n))(x)|^2+|(i(G_n))(x)|^2}\\
        &\leqslant 
        |(i(F_{n+1}))(x)|+ C\|H_n\|_A\|G_n\|_A
        \left\|\frac{1}{|F_n|^2+|G_n|^2}\right\|_A
        \\
        &\leqslant 
        |(i(F_{n+1}))(x)|+
        C \cdot \e\cdot 2^{-n}\frac{\gamma}{CK^3T^2}\cdot K\widehat{T}\\
        &<
        |(i(F_{n+1}))(x)|+2^{-n}\cdot\frac{\gamma}{K^2}\\
        &\leqslant
        |(i(F_{n+1})(x)|+2^{-n}\cdot\frac{\gamma}{4}.
    \end{split}
\end{equation*}
Consequently,
\begin{equation}\label{eq:local-19}
    |(i(F_{n+1}))(x)|>|(i(F_n))(x)|-2^{-n-2}\gamma \quad (x\in X).
\end{equation}
In the same way, we observe that
\begin{equation}\label{eq:local-20}
    |(i(G_{n+1}))(x)|>|(i(G_n))(x)|-2^{-n-2}\gamma \quad (x\in X).
\end{equation}
We conclude from \eqref{eq:local-11} and 
\eqref{eq:local-19}--\eqref{eq:local-20} that 
\begin{equation*}
    \begin{split}
        \inf_{x\in X}\big(|(i(F_{n+1}))(x)|+|(i(G_{n+1}))(x)|\big)
        &\geqslant 
        \inf_{x\in X}\big(|(i(F_{n}))(x)|+|(i(G_{n}))(x)|\big)
        -2\cdot 2^{-n-2}\gamma 
        \\
        &\geqslant 
        \gamma+\gamma\cdot 2^{-n}-\gamma\cdot 2^{-n-1}\\
        & =\gamma+\gamma\cdot 2^{-n-1},
    \end{split}
    \end{equation*}
so \eqref{eq:FGHiii} is fulfilled for $n+1$.

Finally, for \eqref{eq:FGHiv}, by \eqref{eq:local-9}--\eqref{eq:local-10}, \eqref{eq:local-12} and \eqref{eq:local-16}, for $\e \in(0,1),$  $\gamma \in(0,1],$ $ K \geqslant 2,$ and $C\geqslant 1$, we then have
\begin{equation*}
    \begin{array}{lcl}
        \|H_{n+1}\|_{A}
        &\leqslant & 
        \|H_n\|_{A}^2
        \|\overline{F_n}\|_{A}
        \|\overline{G_n}\|_{A}
        \left\|\frac{1}{|F_n|^2+|G_n|^2}\right\|_{A}^2\\
        &=& 
        \|H_n\|_{A}^2
        \|F_n\|_{A}
        \|G_n\|_{A}
        \left\|\frac{1}{|F_n|^2+|G_n|^2}\right\|_{A}^2
        \\
        &\leqslant &
        \left(\e\cdot 2^{-n}\cdot\frac{\gamma}{CK^3T^2}\right)^2
        \cdot K^2 \cdot \widehat{T}^2\\
        &\leqslant &
        \e \cdot 2^{-n}\cdot\frac{\gamma}{CK^3T^2} \cdot\frac{\gamma}{CK^3T^2} \cdot K^2 \cdot \widehat{T}^2\\
        &\leqslant &
        \e \cdot 2^{-n}\cdot\frac{\gamma}{CK^3T^2} \cdot \frac{1}{K}\\
        & \leqslant &
        \e\cdot 2^{-n-1}\cdot\frac{\gamma}{C K^3 T^2},
    \end{array}
\end{equation*}
which verifies \eqref{eq:FGHiv} for $n+1$. 

It follows from \eqref{eq:local-0} and \eqref{eq:FGHiv} that 
\begin{equation}\label{eq:local-22}
    \lim_{n\to\infty}|(i(H_n))(x)|  \leqslant  C\lim_{n\to\infty}\|H_n\|_A
    \leqslant 
    \e\cdot\frac{\gamma}{K^3T^2}\lim_{n\to\infty}2^{-n}=0 \quad (x\in X).
\end{equation}

Suppose that $m,n \in \mathbb{N}$, $m>n.$ For
$\e \in(0,1),$  $\gamma \in(0,1],$ $ K \geqslant 2,$ $C\geqslant 1$, and $T\geqslant 1$,
by 
\eqref{eq:local-5}, \eqref{eq:local-10}, \eqref{eq:local-12},
\eqref{eq:local-16}, we observe that
\begin{equation}\label{eq:local-244}
    \begin{split} 
        \sum_{n=0}^{\infty} \|F_{n+1}-F_n\|_{A}&\leqslant  
        \sum_{n=0}^\infty 
        \|H_n\|_{A}
        \|G_n\|_{A}
        \left\|\frac{1}{|F_n|^2+|G_n|^2}\right\|_{A}\\
        &\leqslant 
        \sum_{n=0}^\infty \frac{1}{2^n}\cdot\frac{\e\gamma}{CK^3T^2}
        \cdot K\widehat{T} \\
        & \leqslant 
        \e \cdot \frac{1}{K^2}\sum_{n=0}^\infty 2^{-n}\\
        &<
        \e \cdot \frac{1}{2}\cdot \sum_{n=0}^\infty \frac{1}{2^n} < \e.
    \end{split}
\end{equation}
\begin{itemize}
\item[] \emph{Case 1.}
From \eqref{eq:local-244} for any $\varepsilon_{1}>0$ there exist $N$ such that for $m,n \in \mathbb{N}, m>n>N$ holds
\begin{equation}\label{eq:local-2444}
    \begin{split}
        \|F_m - F_n\|_{\infty}
        &\leqslant  
        \sum_{j=n}^{m-1} \|F_{j+1}-F_j\|_{\infty}< \varepsilon_1,
    \end{split}
\end{equation}
which means that the sequence $(F_n)_{n=1}^\infty$ is uniformly Cauchy, so it converges uniformly to some continuous function $f$. Similarly, there exists a continuous function $g$ that is the limit of the uniformly convergent sequence $(G_n)_{n=1}^{\infty}$.\smallskip

In particular we obtain
\begin{equation}\label{point}
    \lim_{n\rightarrow \infty} F_n(x) = f(x) \quad \text{ and } \quad \lim_{n\rightarrow \infty} G_n(x) = g(x).
\end{equation}

Using \eqref{point}, \eqref{eq:FGHi} and \eqref{eq:local-22},
%and \eqref{eq:local-21}--\eqref{eq:local-22}, 
we see that
\begin{equation}\label{eq:local-223}
    \begin{array}{lcl}
    f(x)\cdot g(x) & = & \lim\limits_{n\to\infty} \big( F_{n}(x) \cdot G_{n}(x) \big)\\
        & = & \lim\limits_{n\to\infty}\big( F_{n}(x) \cdot G_{n}(x)+H_{n}(x)\big) \\
        & = & F(x) \cdot G(x)+H(x).
    \end{array}
\end{equation}
Moreover, from \eqref{eq:local-244} we have 
\begin{equation}\label{in}
    \begin{split}
        \|f-F\|_{\infty}
        &\leqslant 
        \sum_{n=0}^\infty \|F_{n+1}-F_n\|_{\infty}
         < \e.
    \end{split}
\end{equation}
We show that $\|g-G\|_{\infty}<\e$ in the same way.
\item[] \emph{Case 2.} $A$ is a dual Banach algebra with $A = E^*$ that shares with $X$ densely many points as witnessed by some dense set $Q\subset X$.

In view of \eqref{eq:FGHii}, the sequences $(F_n)_{n=0}^\infty$ and $(G_n)_{n=0}^\infty$ are uniformly bounded by constant $K$. Let $\mathscr U$ be a non-principal ultrafilter on $\mathbb N$. By the Banach--Alaoglu theorem, $(F_n)_{n=0}^\infty$ and $(G_n)_{n=0}^\infty$ converge to some elements $f,g\in A$, $\|f\|, \|g\|\leqslant K$ with respect to $\sigma(A,E)$ along $\mathscr{U}$.
%Since $E$ is weakly compactly generated, so the closed unit ball $B(0,K)$ of $E^*$ is weak* sequentially compact.
%Therefore there exist subnets $(F_{n_\alpha})$ and $(G_{n_\beta})$ of appropriate sequences $(F_n)_{n=0}^\infty$ and $(G_n)_{n=0}^\infty,$ that are $\sigma(A,E)$-convergent to some elements 
 %Calculation~\eqref{eq:local-24} shows that when regarded in $C(X)$ they are subsequences of Cauchy sequences in $C(X)$, hence convergent therein. As the there exists a dense set $Q\subset X$ such that $i^*\delta_x\in E$, we may identify $f$ and $g$ with the $\sigma(A,E)$ limits of  $(F_{n_k})$ and $(G_{n_k})$, respectively.
%
Using \eqref{eq:FGHi} and \eqref{eq:local-22}, we see that for any $x\in Q$
\begin{equation}\label{eq:local-23}
    \begin{split}
        (i(f))(x)\cdot (i(g))(x) & =  \langle \delta_x, i(fg)\rangle\\
        & =  \langle i^*(\delta_x), fg\rangle\\
        & = \lim_{n\to\mathscr{U}}\langle i^*(\delta_x), F_{n}G_n\rangle\\
          & = \lim_{n\to\mathscr{U}}\langle \delta_x, i(F_{n}G_n)\rangle\\
        & =  \lim_{n\to\mathscr{U}} \big( (i(F_{n}))(x)\cdot (i(G_{n}))(x) \big)\\
        & = \lim_{n\to\mathscr{U}}\big((i(F_{n}))(x) \cdot (i(G_{n}))(x)+(i(H_{n}))(x)\big) 
    \end{split}
\end{equation}
nonetheless, it follows from \eqref{eq:FGHi} that 
\[
    \| i\big(F_n G_n + H_n - (FG+H)\big)\|_{\infty}  \leqslant C \cdot \| F_n G_n + H_n - (FG+H)\|_{A} = 0,
\]
hence for any $x \in Q$ we have
\begin{equation}\label{eq:local-2333}
    \begin{split}
        i \big( f(x)g(x) \big) = i \big( F(x)G(x)+H(x) \big).
    \end{split}
\end{equation}
Since $Q$ is a dense subset of $X,$ by continuity of $i$ and elements belonging to $C(X),$ there are equal everywhere. 
This means that 
\begin{equation}\label{ep}
    fg = FG + H,
\end{equation}
 because $i$ is injective. Similarly, for $x\in Q$
\begin{equation}\label{eq:local-233}
    \begin{split}
        (i(f))(x)-(i(F))(x)  &= 
         \langle \delta_x, i(f-F)\rangle\\
        &= 
           \langle i^*(\delta_x), f-F\rangle\\
        &= 
        \lim_{n\to\mathscr{U}} \langle i^*(\delta_x), F_{n}-F\rangle\\
        &= 
         \lim_{n\to\mathscr{U}} \langle \delta_x, i(F_{n}-F)\rangle\\
        &= 
        \lim_{n\to\mathscr{U}}\big( (i(F_{n}))(x)-(i(F))(x)\big)\\
        &= 
        \lim_{n\to\mathscr{U}}\sum_{j=0}^{n}\big( (i(F_{j+1}))(x)-(i(F_j(x)\big)\\
    \end{split}
\end{equation}
but from \eqref{eq:local-244} we know that
\[
    \sum_{n=0}^\infty \big\| i(F_{n+1})-i(F_n)\big\|_{\infty} \leqslant  C \cdot \sum_{n=0}^{\infty} \|F_{n+1}-F_n\|_{A}, 
\]
so for any $x\in Q$
\[
 (i(f))(x)-(i(F))(x) = \sum_{n=0}^\infty \big( (i(F_{n+1}))(x)-(i(F_n))(x)\big),
\]
hence, again by density of $Q$ in $X,$ continuity of $i$ and elements belonging to $C(X),$ we have this equality everywhere. Moreover, since $i$ is injective, we obtain
\[
    f-F = \sum_{n=0}^\infty \big( F_{n+1}-F_n\big),
\]
so from \eqref{eq:local-244} we have 
\begin{equation}\label{eq:local-24}
    \begin{split}
        \|f-F\|_{A}
        &\leqslant 
        \sum_{n=0}^\infty \|F_{n+1}-F_n\|_{A}
         < \e.
    \end{split}
\end{equation}
We show that $\|g-G\|_{A}<\e$ in the same way.
\end{itemize}

In each of the above cases, we have obtained the appropriate functions $f$ and $g$, which, to simplify the notation, have been marked with the same symbols. So, for every $H \in A$ satisfying \eqref{eq:local-3}, there 
exist $f$ and $g$ in $A$ such that 
\[
\|f-F\|_{A}<\e, \quad \|g-G\|_{A}<\e
\] 
(see respectively \eqref{in} or \eqref{eq:local-24}) and $FG + H = fg$ (see respectively \eqref{eq:local-223} or \eqref{ep}). 
This means that
\[
    B_{A}(F\cdot G,\delta)
    \subset 
    B_{A}(F,\e)\cdot B_{A}(G,\e)
\]
with $\delta:=\e\cdot \frac{\gamma}{C K^3 T^2}$. Hence, the multiplication in $A$ is locally open at the pair $(F,G)\in A^2$.

Suppose now that $i$ has dense range in $C(X)$. By inverse-closedness of $A$, $A$ has topological stable rank 1 if and only if $C(X)$ has topological stable rank 1. Consequently, if $C(X)$ fails to have dense invertibles (which happens exactly when $\dim X > 1$), then $A$ does not have open multiplication. 
%Otherwise Lemma~\ref{lem:nondeg} applies \textcolor{red}{Should be moved to hypotheses}.
\end{proof}

Applying Theorem~\ref{lem:diff} we obtain the following conclusion.

\begin{corollary}
        Suppose that $A$ is a unital Banach *-algebra such that there exists an injective *-homomorphisim $i\colon A\to C(X)$ for some compact space $X$ such that $A$ is a differential subalgebra of $C(X)$. Let us consider either case:
    \begin{itemize}
        \item $A = C(X)$, 
        \item $A = E^*$ is a dual Banach algebra that shares with $X$ densely many points.
    \end{itemize}
    Then multiplication in $A$ is open at all pairs of jointly non-degenerate elements. 
\end{corollary}
\begin{corollary} 
    Let $A$ be a (complex) reflexive Banach space with a $K$-unconditional basis $(e_\gamma)_{\gamma\in \Gamma}$ $(K\geqslant 1)$. Then $A$ is naturally a Banach *- algebra when endowed with multiplication 
    \[
        a \cdot b = \sum_{\gamma\in \Gamma} a_\gamma b_\gamma e_\gamma \quad (a = \sum_{\gamma\in \Gamma} a_\gamma e_\gamma, b = \sum_{\gamma\in \Gamma} b_\gamma e_\gamma\in A).
    \]
    and coordinate-wise complex conjugation. Let $A^\#$ denote the unitisation of $A$. Then $A^\#$ has open multiplication. 
\end{corollary}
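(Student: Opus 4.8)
The plan is to realise $A^\#$ as a dense differential *-subalgebra of $C(X)$ for a suitable zero-dimensional compact space $X$, invoke the lemma that proves Theorem~\ref{thm:A} to obtain openness at jointly non-degenerate pairs, and then discard the non-degeneracy hypothesis by an approximation argument adapted to the unconditional basis.

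First I would identify the maximal ideal space. Since $e_\gamma^2=e_\gamma$ and $e_\gamma e_{\gamma'}=0$ for $\gamma\neq\gamma'$, every character of $A$ is a coordinate evaluation $a\mapsto a_\gamma$, so the character space of $A^\#$ is the one-point compactification $X=\Gamma\cup\{\infty\}$ of the discrete set $\Gamma$, a zero-dimensional compact space; the Gelfand transform $i$ sends $\lambda\mathds 1+a$ to the function $\gamma\mapsto\lambda+a_\gamma$, $\infty\mapsto\lambda$, and is injective by semisimplicity. The $K$-unconditionality gives the multiplier bound $\|ab\|_A\le K\|i(a)\|_\infty\|b\|_A$ for $a,b\in A$, which upgrades to the differential inequality \eqref{dif} on the unitisation, so $A^\#$ is a differential *-subalgebra of $C(X)$; symmetry is clear as $i(a^*a)=|i(a)|^2\ge 0$. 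Being reflexive, $A^\#$ is a dual Banach algebra (each multiplication operator is bounded, hence weakly continuous), and since $A^\#$ is its own bidual every functional $i^*(\delta_x)$ lies in the predual, so $A^\#$ shares with $X$ all of its points. Lemma~\ref{l32} then yields that multiplication in $A^\#$ is open at every jointly non-degenerate pair.

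To pass to arbitrary pairs I would use the standard reduction: it suffices to prove that $A^\#$ is approximable by jointly non-degenerate products, for then, given $(u,v)$ and $\varepsilon>0$, one picks jointly non-degenerate $(u',v')$ with $u'v'=uv$ and $\|u-u'\|,\|v-v'\|<\varepsilon/2$, and openness at $(u',v')$ supplies $\delta>0$ with $B(uv,\delta)=B(u'v',\delta)\subseteq B(u',\varepsilon/2)B(v',\varepsilon/2)\subseteq B(u,\varepsilon)B(v,\varepsilon)$. A pair fails to be jointly non-degenerate exactly when its transforms have a common zero on the compact space $X$, so I split into two cases. If at least one unit part $\lambda,\mu$ is non-zero, then $|i(u)|^2+|i(v)|^2\to|\lambda|^2+|\mu|^2>0$ towards $\infty$, so the set where this quantity is small is finite; on it I replace the two values coordinatewise, keeping each product $u_\gamma v_\gamma$ fixed while forcing one factor to be non-zero, and a finite perturbation stays in $A^\#$ and is small in norm.

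The remaining, and main, case is $u=a,\ v=b\in A$ (common zero at $\infty$). Here I fix a threshold $\eta>0$, set $L=\{\gamma:|a_\gamma|\ge\eta\text{ or }|b_\gamma|\ge\eta\}$ (finite, as $a_\gamma,b_\gamma\to 0$) and $T=\Gamma\setminus L$, and define
\begin{equation*}
u'=a\,\mathds 1_L+\eta\,\mathds 1_{T\cup\{\infty\}},\qquad v'=b\,\mathds 1_L+\tfrac1\eta\,(ab)\,\mathds 1_T .
\end{equation*}
These lie in $A^\#$ because $\mathds 1_{T\cup\{\infty\}}=\mathds 1-\sum_{\gamma\in L}e_\gamma$ and $(ab)\mathds 1_T\in A$; one checks $u'v'=ab$ coordinatewise, and joint non-degeneracy holds since $|i(u')|^2+|i(v')|^2\ge\eta^2$ on $L$ and equals $\eta^2$ on $T\cup\{\infty\}$, hence is invertible by inverse-closedness. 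The crux is the norm control: replacing $a$ by a constant on the cofinite set $T$ — as in the zero-dimensional $C(X)$ computation of the Remark — is harmless in sup-norm but \emph{not} in $\|\cdot\|_A$, so the whole point is to see that the errors are small in $A$. I would establish, from unconditional convergence of $a=\sum_\gamma a_\gamma e_\gamma$ together with $K$-unconditionality (equivalently, order-continuity of the norm, valid since $A$ is reflexive), that
\begin{equation*}
\big\|a\,\mathds 1_{\{|a_\gamma|<\eta\}}\big\|_A \qquad\text{and}\qquad \eta\,\Big\|\sum_{\gamma\in L}e_\gamma\Big\|_A
\end{equation*}
both tend to $0$ as $\eta\to 0^+$. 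Granting these, $\|u'-a\|_{A^\#}\le\eta+\eta\|\sum_{L}e_\gamma\|_A+\|a\mathds 1_T\|_A$ and, writing $(ab)\mathds 1_T=(a\mathds 1_T)(b\mathds 1_T)$, $\|v'-b\|_A\le\|b\mathds 1_T\|_A+\tfrac1\eta\|(a\mathds 1_T)(b\mathds 1_T)\|_A\le(1+K)\|b\mathds 1_T\|_A$; both are small for small $\eta$, completing the approximation. The main obstacle is precisely this $A$-norm control — making the cofinite modification on the tail negligible in the algebra norm rather than merely uniformly — and it is here that reflexivity, through order-continuity and unconditional convergence, does the essential work.
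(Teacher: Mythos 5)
Your argument follows the same basic route as the paper's: exhibit $A^\#$ as a differential *-subalgebra of $c(\Gamma)=C(\Gamma\cup\{\infty\})$ via the unconditionality estimate $\|ab\|_A\leqslant K\|a\|_A\|b\|_{\ell_\infty(\Gamma)}$ and then invoke Lemma~\ref{l32}. The difference lies in how one passes from openness at jointly non-degenerate pairs to openness everywhere. The paper disposes of this in one line by asserting that \emph{every} pair of elements of $A^\#$ is jointly non-degenerate; as stated this is false --- the pair $(0,0)$, or $(e_\gamma,e_\gamma)$, is degenerate, since $|e_\gamma|^2+|e_\gamma|^2=2e_\gamma$ is not invertible in $A^\#$ --- so what is actually needed is precisely the reduction you carry out, namely approximability by jointly non-degenerate products. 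Your construction $u'=a\,\mathds 1_L+\eta\,\mathds 1_{T\cup\{\infty\}}$, $v'=b\,\mathds 1_L+\eta^{-1}(ab)\,\mathds 1_T$ is the correct discrete analogue of the zero-dimensional $C(X)$ argument in the paper's Remark, and the two norm estimates you identify as the crux are both true: $\|a\,\mathds 1_{\{|a_\gamma|<\eta\}}\|_A\to0$ is subseries convergence of the unconditionally convergent expansion of $a$, while $\eta\,\|\sum_{\gamma\in L_\eta}e_\gamma\|_A\to0$ follows by fixing $\eta'>\eta$, splitting $L_\eta$ as $L_{\eta'}\cup(L_\eta\setminus L_{\eta'})$, dominating $\eta\,\mathds 1_{L_\eta\setminus L_{\eta'}}$ coordinatewise by $|a|\,\mathds 1_{\{|a_\gamma|<\eta'\}}+|b|\,\mathds 1_{\{|b_\gamma|<\eta'\}}$, and letting first $\eta$ and then $\eta'$ tend to $0$ (the finitely many coordinates of $L_{\eta'}$ being harmless). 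The remaining case you treat, where at least one scalar part is non-zero, indeed reduces to a finite coordinate perturbation. In short, your proof is not so much a different route as a completion of the published one: it supplies the approximation step that the paper's argument needs but does not contain.
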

\begin{proof} It is clear any pair of elements of $A^\#$ is approximable by jointly non-degenerate products (see Definition \ref{df23}). Since the basis $(e_\gamma)_{\gamma\in \Gamma}$ is $K$-unconditional, we have
    \begin{equation*}
    \begin{split}
        \|ab\|_A &=
        \bigg\| \sum_{\gamma\in \Gamma}  a_\gamma b_\gamma e_\gamma \bigg\|_A \\
        &\leqslant K \bigg\|\sum_{\gamma\in \Gamma}  a_\gamma \cdot \|b\|_{\ell_\infty(\Gamma)} \cdot e_\gamma \bigg\|_A \\
        &=K\|a\|_A \|b\|_{\ell_\infty(\Gamma)}\\
        &\leqslant K(\|a\|_A \|b\|_{\ell_\infty(\Gamma)}+\|a\|_{\ell_\infty(\Gamma)}\|b\|_A).
    \end{split}
    \end{equation*}
This means that $A^\#$ is a differential subalgebra of $c(\Gamma)$, the unitisation of the algebra of functions that vanish at infinity on $\Gamma$. Since the formal inclusion from $A^\#$ to $c(\Gamma)$ has dense range, the conclusion follows.
\end{proof}

We now turn our attention to Theorem~\ref{thm:B}.
\begin{proof}[Proof of Theorem~\ref{thm:B}]
    By Lemma~\ref{lem:idempotent}, there exists an ultrafilter $\mathscr{U}$ such that $\mathbb Z^{(\mathbb R)}$ embeds into $G^{\mathscr{U}}$. As $\mathbb Z^{(\mathbb R)}$ is a free Abelian group, it admits a surjective homomorphism $\varphi$ onto $\mathbb{Q}^{(\mathbb N)}$. Since $\mathbb{Q}^{(\mathbb N)}$ is divisible, it is an injective object in the category of Abelian groups, so $\varphi$ extends to a~homomorphism $\overline{\varphi}\colon G^{\mathscr{U}}\to \mathbb{Q}^{(\mathbb N)}$. In particular, the infinite-dimensional space $\widehat{\mathbb{Q}^{(\mathbb N)}}\cong \mathbb T^{\mathbb N}$ embeds topologically into $\widehat{G^{\mathscr{U}}}$. 
    
    Consequently, $\dim \widehat{G^{\mathscr{U}}} = \infty > 1$. By \cite[Corollary 4.10]{DraKa}, multiplication in $\ell_1(G^{\mathscr{U}})$ is not open. However, $\ell_1(G^{\mathscr{U}})$ is a quotient of the Banach-algebra ultrapower $(\ell_1(G))^{\mathscr{U}}$ (\cite[Section 2.3.2]{DraKa}), so by \cite[Corollary 3.3]{DraKa}, convolution in $\ell_1(G)$ is not uniformly open.
\end{proof}

\section{Proof of Theorem~\ref{thm:c}}
The present section is devoted to the proof of Theorem~\ref{thm:c}. We start by proving a special case of $X = [0,1]$; the argument is a slightly improved version of a proof due to Behrends. We are indebted for his permission to include it here.

\begin{theorem}\label{th4.1}    
    The (complex) algebra $C[0,1]$ has uniformly open multiplication.
\end{theorem}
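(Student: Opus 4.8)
The plan is to show that for every $\varepsilon>0$ there is a $\delta>0$ (I expect $\delta=\varepsilon^2/4$ to work, matching the constant announced for Theorem~\ref{thm:c}) such that for all $f,g\in C[0,1]$ one has $B(fg,\delta)\subseteq B(f,\varepsilon)\cdot B(g,\varepsilon)$. Concretely, given $f,g$ and a target $k$ with $\|k-fg\|_\infty<\delta$, I must produce $\widetilde f,\widetilde g\in C[0,1]$ with $\widetilde f\,\widetilde g=k$ and $\|\widetilde f-f\|_\infty<\varepsilon$, $\|\widetilde g-g\|_\infty<\varepsilon$. The obstruction compared with the zero-dimensional case treated in the Remark is connectedness: one cannot cut $[0,1]$ into clopen pieces, so the factorisation must be built continuously, and the essential difficulty is that $f$ and $g$ may have \emph{common zeros} on $[0,1]$, where the naive correction $\widetilde f=f+(k-fg)\overline g/(|f|^2+|g|^2)$ blows up.

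First I would reduce to a local analysis near the common zero set $Z=\{x:f(x)=g(x)=0\}$. Away from $Z$ the quantity $|f|^2+|g|^2$ is bounded below on each compact piece, so the iterative scheme of Lemma~\ref{l32} (or a single-step Newton-type correction) produces good factors there with norm control governed by $\delta$; the whole problem concentrates on neighbourhoods of $Z$. Near a point of $Z$ both $|f|$ and $|g|$ are small (say $<\varepsilon/2$), while $|k|\le|fg|+\delta$ is of order $\delta$, so I have room to \emph{redefine} $\widetilde f$ and $\widetilde g$ from scratch on such a neighbourhood: I want to write $k=\widetilde f\,\widetilde g$ with both factors of modulus $\lesssim\varepsilon$. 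Taking polar form $k=|k|e^{\ii\theta}$ locally, the symmetric choice $\widetilde f=\sqrt{|k|}\,e^{\ii\theta/2}$, $\widetilde g=\sqrt{|k|}\,e^{\ii\theta/2}$ gives $|\widetilde f|=|\widetilde g|=\sqrt{|k|}\le\sqrt{\delta}=\varepsilon/2<\varepsilon$, which is exactly where the constant $\varepsilon^2/4$ enters.

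The genuinely delicate step is \textbf{continuity of the phase}: the argument $\theta$ of $k$ need not be globally definable where $k$ vanishes, and one cannot in general choose a continuous square root of a complex function with zeros. This is precisely the point where one-dimensionality of $[0,1]$ is used. The plan is to exploit that $[0,1]$ is an interval: I would partition $[0,1]$ by a finite mesh $0=t_0<t_1<\cdots<t_N=1$ fine enough that on each subinterval either $|f|^2+|g|^2$ stays bounded away from $0$ (a ``good'' interval, handled by the correction scheme) or both $|f|,|g|$ stay $<\varepsilon/2$ (a ``small'' interval, handled by the square-root construction), and these two regimes overlap on a band where both descriptions are valid. On each small interval I build $\widetilde f,\widetilde g$ directly, and on each good interval by correction, then I \emph{splice} the local factorisations across the mesh points using a continuous partition of unity in the overlap bands, adjusting phases by multiplying one factor by a unimodular continuous function and the other by its conjugate so that the product $\widetilde f\,\widetilde g=k$ is preserved exactly while both factors stay within $\varepsilon$ of $f,g$. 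The linear order of $[0,1]$ lets this splicing be carried out sequentially from left to right, reconciling phase choices one mesh point at a time; this is the heart of Behrends' argument and the step I expect to demand the most care.

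Finally I would assemble the global $\widetilde f,\widetilde g$ from the spliced pieces, check that the product is identically $k$ on all of $[0,1]$, and verify the uniform bounds $\|\widetilde f-f\|_\infty,\|\widetilde g-g\|_\infty<\varepsilon$, tracking that every constant depends only on $\varepsilon$ and not on $f,g,k$, so that the openness is genuinely uniform with the stated $\delta(\varepsilon)=\varepsilon^2/4$. The output is a single $\delta$ working simultaneously for all $(f,g)$, which is what uniform openness requires and what will later feed the inverse-limit and $\check{\text{C}}$ech-approximation steps of Theorem~\ref{thm:c}.
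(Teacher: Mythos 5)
Your skeleton matches the paper's: split $[0,1]$ into subintervals where $(f,g)$ is jointly non-degenerate (handled by a correction scheme) and subintervals where both $|f|$ and $|g|$ are small (handled by refactorising $k=fg+d$ from scratch), then glue. But the gluing step, which you yourself flag as ``the heart of the argument,'' is exactly where your proposal fails, and no workable mechanism is supplied. At a junction point $t$ between a good interval and a small interval, the good-interval factorisation gives values $(f+d_1)(t)$, $(g+d_2)(t)$ whose product is $k(t)$, while your symmetric choice on the small interval gives $\sqrt{|k(t)|}e^{\ii\theta(t)/2}$ for both factors. Two factorisations of the same nonzero complex number differ by an arbitrary $\lambda\in\C\setminus\{0\}$, not merely a unimodular one: the moduli $|(f+d_1)(t)|$ and $|(g+d_2)(t)|$ can be very unbalanced (one near $2\varepsilon_1$, the other tiny), so multiplying one factor by a unimodular $u$ and the other by $\bar u$ cannot reconcile them. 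Blending the two factorisations with a partition of unity destroys the exact identity $\widetilde f\,\widetilde g=k$. The paper's actual device (its Lemmas \ref{l4.12} and the one preceding it) is to build the factors on each small interval so as to \emph{interpolate the prescribed boundary values in both modulus and phase}: choose $Z_1$ running from the prescribed value to a square root $\hat Z$ of $\psi$ at an interior point while keeping $\sqrt{|\psi(t)|}\le|Z_1(t)|\le\varepsilon$ (possible because the relevant region of $\C$ is path-connected --- this, not the existence of a continuous square root, is where complexness enters), and then set $Z_2:=\psi/Z_1$. That interpolation lemma is the missing idea in your write-up.

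A second, fixable but real, gap concerns uniformity on the good intervals. You propose to invoke the iterative scheme of Lemma \ref{l32}, but the $\delta$ produced there is $\e\gamma/(CK^3T^2)$ with $K\geq\max\{\|f\|,\|g\|\}$, so it degenerates as $\|f\|,\|g\|\to\infty$ and cannot give a single $\delta(\varepsilon)$ valid for all pairs. The paper avoids this by a different good-interval argument: it constructs unimodular $\beta_1,\beta_2$ with $|f\beta_1+g\beta_2|\geq\eta$ (Lemmas \ref{l4.6}--\ref{l4.8}, using that $\mathbb T$-valued maps extend continuously over gaps in an interval) and then solves the resulting quadratic equation exactly via the smaller-root function $Z$ (Lemma \ref{l4.5}), yielding a $\delta$ that depends only on $\eta$ and $\varepsilon$. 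If you insist on the Newton-type correction you must rework its estimates in the sup norm to remove the dependence on $\|f\|,\|g\|$; and note finally that the paper does not claim $\delta(\varepsilon)=\varepsilon^2/4$ for $[0,1]$ --- that constant is asserted only for zero-dimensional spaces, while the interval argument loses a factor (roughly $\varepsilon_1=\varepsilon_0/7$).
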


In order to prove Theorem~\ref{th4.1} we require further auxiliary results.

Anywhere below $\Delta$ will denote a set  of all $(\alpha, \beta, \gamma) \in \mathbb{C}^{3}$ such that $|\gamma|=1$ and the polynomial $\gamma z^{2}+\beta z+\alpha$ has two roots of different absolute value. In particular, in this situation, there is a uniquely determined root, so  we can introduce the following definition

\begin{definition}
    We denote by $Z\colon \Delta \rightarrow \mathbb{C}$ the map that assigns to $(\alpha, \beta, \gamma)$ the root of the quadratic polynomial $\gamma z^{2}+\beta z+\alpha$ with the smaller absolute value.
\end{definition}
\begin{remark}\label{rm2}
    The root function is locally analytic, so the function $Z$ is continuous.
\end{remark}

Let us now fix a non-degenerate interval $\left[a_{0}, b_{0}\right]$.

We denote by $\overline{I}$ (respectively $I^{o}$) the closure (respectively, the interior) of an interval.

\begin{lemma}\label{l4.8}
   For any function $h\in C[a_{0}, b_{0}]$ and arbitrary $\eta_{2}>\eta_{1}>0$ there are pairwise disjoint closed subintervals $J_{1}, \ldots, J_{k}$ of $\left[a_{0}, b_{0}\right]$ such that
    \[
    \left\{t\in [a_{0}, b_{0}] \colon |h(t)| \leqslant \eta_{1}\right\} \subset \bigcup_{j=1}^k J_j \subset\left\{t \in [a_{0}, b_{0}]\colon |h(t)|<\eta_{2}\right\}.
    \]
\end{lemma}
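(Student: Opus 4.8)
The plan is to reduce the problem to a standard fact about sublevel sets of continuous functions on a compact interval. Given $h \in C[a_0,b_0]$ and thresholds $\eta_2 > \eta_1 > 0$, consider the two sets
\[
    A := \{t \in [a_0,b_0] \colon |h(t)| \leqslant \eta_1\}, \qquad U := \{t \in [a_0,b_0] \colon |h(t)| < \eta_2\}.
\]
First I would observe that $A$ is closed (hence compact) and $U$ is open in $[a_0,b_0]$, and that $A \subset U$ since $\eta_1 < \eta_2$ forces $|h(t)| \leqslant \eta_1 < \eta_2$ on $A$. The entire task is therefore to produce finitely many pairwise disjoint closed subintervals that sandwich themselves between a compact set and an open neighbourhood of it; the values of $h$ play no further role once these two sets are fixed.

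The key step is the covering argument. For each $t \in A$, since $U$ is open and contains $t$, choose an open interval $I_t$ with $t \in I_t$ and $\overline{I_t} \subset U$; this is possible because $A \subset U$ and $U$ is open in the metric space $[a_0,b_0]$. The family $\{I_t\}_{t \in A}$ is an open cover of the compact set $A$, so I would extract a finite subcover $I_{t_1}, \ldots, I_{t_m}$. Their union $V := \bigcup_{r=1}^m I_{t_r}$ is then an open set with $A \subset V$ and $\overline{V} = \bigcup_r \overline{I_{t_r}} \subset U$. At this stage $V$ is a finite union of open intervals, so it is itself a finite disjoint union of open intervals (merging overlapping ones); taking closures yields finitely many closed intervals whose union is $\overline{V}$.

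The remaining work is bookkeeping to turn these into \emph{pairwise disjoint closed} intervals $J_1, \ldots, J_k$. The connected components of the open set $V$ in $\mathbb{R}$ are disjoint open intervals; since $V$ is a finite union of bounded intervals there are finitely many components, say $V = \bigsqcup_{j=1}^k V_j$ with the $V_j$ disjoint open intervals. Setting $J_j := \overline{V_j}$, I would note that distinct components of an open subset of $\mathbb{R}$ have disjoint closures (their closures can only meet at a shared endpoint, but a shared endpoint would force the two open components to be a single connected piece, a contradiction), so the $J_j$ are pairwise disjoint closed intervals. Then
\[
    A \subset V = \bigcup_{j=1}^k V_j \subset \bigcup_{j=1}^k J_j = \overline{V} \subset U,
\]
which is exactly the desired sandwich.

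The main obstacle, and the only genuinely delicate point, is the disjointness of the closures $J_j = \overline{V_j}$: one must rule out the degenerate situation where two components of $V$ abut at a common boundary point so that their closed intervals share an endpoint. I expect to handle this by the observation that in $\mathbb{R}$ the components of an open set are \emph{maximal} open intervals, so if $\overline{V_j} \cap \overline{V_{j'}} \neq \varnothing$ for $j \neq j'$ the two intervals would be adjacent with a common endpoint $p$; but then a small neighbourhood of $p$ lies in $V$ (or one can shrink the covering intervals slightly while preserving $\overline{I_{t_r}} \subset U$), forcing $V_j$ and $V_{j'}$ to lie in the same component, a contradiction. Everything else is routine compactness and point-set topology on the line.
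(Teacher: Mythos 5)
Your proposal follows essentially the same route as the paper: set $K=\{t:|h(t)|\leqslant\eta_1\}$ and $O=\{t:|h(t)|<\eta_2\}$, cover the compact set $K$ by finitely many open intervals whose closures lie in $O$, and take closures. The one place where your reasoning goes wrong is the "delicate point" you flag yourself: it is \emph{not} true that distinct connected components of an open subset of $\mathbb{R}$ have disjoint closures. For $V=(0,1)\cup(1,2)$ the components are $(0,1)$ and $(1,2)$, and their closures share the point $1$; a shared endpoint does \emph{not} force the two components to merge, precisely because that endpoint need not belong to $V$ (indeed, by maximality of components it cannot). Your proposed remedies do not work either: a neighbourhood of the common endpoint $p$ need not lie in $V$, and shrinking the covering intervals does not prevent closures from abutting. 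The correct (and trivial) repair is the one the paper uses: after taking closures, merge any of the resulting closed intervals that touch or overlap into a single closed interval; each merged interval is still contained in $\overline{V}\subset O$, and after finitely many merges the intervals are pairwise disjoint. With that one-line fix your argument is complete and coincides with the paper's proof.
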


\begin{proof}
    Consider the sets $K:=\left\{t \colon |h(t)| \leqslant \eta_{1}\right\}$ and $O:=\left\{t \colon |h(t)|<\eta_{2}\right\}$. Since $K \subset O$, for any $t \in K$ we may find an open subinterval $I_{t}$ so that $t \in I_{t} \subset \overline{I_{t}} \subset O.$ As $K$ is compact, it is possible to cover $K$ with finitely many such intervals, whose closures are the sought sets $J_i$ (it might be necessary to pass to unions if they are not disjoint).
\end{proof}

\begin{lemma}
    Let $h_{1}, h_{2}\in C\left[a_{0}, b_{0}\right]$. Suppose that $h_1$ and $h_2$ are jointly $\eta^2$-non-degenerate for some $\eta >0 $. Then there are continuous $\beta_{1}, \beta_{2}\colon \left[a_{0}, b_{0}\right] \rightarrow \mathbb{T}$ such that 
    \[
        \left|h_{1}(t) \beta_{1}(t)+h_{2}(t) \beta_{2}(t)\right| \geqslant \eta\quad (t \in\left[a_{0}, b_{0}\right]).
    \]
\end{lemma}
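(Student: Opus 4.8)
The plan is to take $\beta_1\equiv 1$ and concentrate the whole difficulty in a single continuous unimodular function $\beta:=\beta_2$. Writing $p:=\overline{h_1}h_2\in C[a_0,b_0]$, for any $\beta\colon[a_0,b_0]\to\mathbb T$ one has
\[
    |h_1+h_2\beta|^2 = |h_1|^2+|h_2|^2+h_1\overline{h_2\beta}+\overline{h_1}h_2\beta = |h_1|^2+|h_2|^2+2\,\mathrm{Re}(p\beta),
\]
so, since $h_1,h_2$ are jointly $\eta^2$-non-degenerate (\emph{i.e.} $|h_1|^2+|h_2|^2\geq\eta^2$), it will suffice to produce a continuous $\beta\colon[a_0,b_0]\to\mathbb T$ with $\mathrm{Re}(p(t)\beta(t))\geq 0$ for every $t$. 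This is precisely the half-plane relaxation of the pointwise orthogonalisation of Lemma~\ref{l4.6}: the element $c$ produced there (with $z=h_1$, $w=h_2$) satisfies $\mathrm{Re}(pc)=0$, and relaxing the equality $\mathrm{Re}=0$ to the inequality $\mathrm{Re}\geq 0$ is exactly what buys enough room to make the selection continuous.

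Away from the zeros of $p$ there is a canonical admissible choice, namely $\beta=\overline{p}/|p|$, which gives $\mathrm{Re}(p\beta)=|p|>0$ and is continuous on each component of $\{p\neq 0\}$. The zero set $\{p=0\}=\{h_1=0\}\cup\{h_2=0\}$ is compact and contained in $\{|p|\leq\eta_1\}$ for every $\eta_1>0$; fixing $0<\eta_1<\eta_2$ and applying Lemma~\ref{l4.8} to $|p|$, I obtain pairwise disjoint closed subintervals $J_1,\dots,J_k$ with $\{p=0\}\subset\{|p|\leq\eta_1\}\subset\bigcup_j J_j\subset\{|p|<\eta_2\}$, which may moreover be chosen so that $p$ does not vanish at their endpoints. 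On the complementary closed set $[a_0,b_0]\setminus\bigcup_j J_j^{o}$ the function $p$ is bounded away from $0$, and there I set $\beta=\overline p/|p|$.

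It remains to extend $\beta$ across each $J_j=[c_j,d_j]$ continuously, with values in $\mathbb T$, while keeping $\mathrm{Re}(p\beta)\geq 0$; this is the crux. The boundary values $\beta(c_j),\beta(d_j)$ are already prescribed by the outer choice, and on each component of $J_j\cap\{p\neq 0\}$ the admissible values of $\beta$ form a closed arc of $\mathbb T$ of length $\pi$, so a continuous selection there stays in the half-plane. At the genuine zeros of $p$ inside $J_j$ the constraint $\mathrm{Re}(p\beta)\geq 0$ becomes vacuous, since $p\beta=0$; exactly as in Lemma~\ref{l4.7}, the full fibre $\mathbb T$ available over each such zero lets me reconnect the selections by a path in $\mathbb T$. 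Concretely, I drive $q:=p\beta$ through the origin along the imaginary axis — letting the phase of $q$ tend to $+\tfrac{\pi}{2}$ as $t$ approaches a zero from one side and emerge at $-\tfrac{\pi}{2}$ on the other — so that $q$ never leaves the closed right half-plane, while $\beta=q/p$ remains unimodular and continuous across the zero. Setting $\beta_2:=\beta$ and $\beta_1\equiv 1$ then yields $|h_1\beta_1+h_2\beta_2|=|h_1+h_2\beta|\geq\eta$ throughout $[a_0,b_0]$.

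The main obstacle is precisely this last continuous extension with the inequality $\mathrm{Re}(p\beta)\geq 0$ preserved \emph{on the nose}. A blind application of Lemma~\ref{l4.7} inside $J_j$ would only control $\mathrm{Re}(p\beta)\geq -|p|\geq -\eta_2$, and hence would give the weaker bound $|h_1\beta_1+h_2\beta_2|\geq\sqrt{\eta^2-2\eta_2}<\eta$; it is the half-plane slack, combined with the fact that the selection can be rerouted precisely over the zeros of $p$ (where $\mathbb T$ is the whole fibre), that upgrades this to the sharp constant $\eta$. Verifying that $\beta=q/p$ stays continuous across the zeros at which $\arg p$ jumps — the odd-order zeros — is the technical heart of the argument, and it is here that the complex scalars are indispensable, just as in the remark following Lemma~\ref{l4.7}.
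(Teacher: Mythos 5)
Your reduction is correct as far as it goes: with $\beta_1\equiv 1$ and $p:=\overline{h_1}h_2$ one indeed has $|h_1+h_2\beta|^2=|h_1|^2+|h_2|^2+2\,\mathrm{Re}(p\beta)$, so a continuous unimodular $\beta$ with $\mathrm{Re}(p\beta)\geq 0$ everywhere would finish the proof. The gap is that such a $\beta$ need not exist, so the ``crux'' extension step you describe cannot be carried out in general. Your picture of driving $q=p\beta$ through the origin along the imaginary axis tacitly assumes that the zeros of $p$ are isolated and that $\arg p$ has one-sided limits at each of them; neither holds for general continuous $h_1,h_2$. Concretely, on $[a_0,b_0]=[-1,1]$ take $h_1(t)=|t|e^{i/|t|}$ for $t\neq 0$, $h_1(0)=0$, and $h_2\equiv\eta$. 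These are continuous and jointly $\eta^2$-non-degenerate, and $p(t)=\eta|t|e^{-i/|t|}$. If $\beta=e^{i\phi}$ with $\phi$ continuous on $[0,1]$, then $\mathrm{Re}(p\beta)\geq 0$ forces $\cos(\phi(t)-1/t)\geq 0$ for $t>0$; but $\phi(t)-1/t$ is continuous and tends to $-\infty$ as $t\to 0^{+}$, hence passes through odd multiples of $\pi$, where the cosine is $-1$. So no continuous unimodular $\beta$ keeps $p\beta$ in the closed right half-plane, and allowing $\beta_1$ to vary does not help, since only the product $\overline{\beta_1}\beta_2$ enters. In other words, the exact pointwise inequality $|h_1\beta_1+h_2\beta_2|^2\geq|h_1|^2+|h_2|^2$ is genuinely unattainable in general, which is why the paper does not attempt it.

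The repair is the compactness surplus you deliberately avoided. The paper first upgrades the hypothesis to joint $(\eta^2+\eta_0^2)$-non-degeneracy with $2\eta_0^2<\eta^2$, localises (via Lemma~\ref{l4.8} applied to $|h_1|$ and to $|h_2|$ separately, with a small parameter $\tau$) to intervals where one of the two functions is tiny and hence the other is large, uses Lemma~\ref{l4.6} to get $|h_1\beta_1+h_2\beta_2|^2=|h_1|^2+|h_2|^2$ off those intervals, and on them needs only the reverse triangle inequality $|h_1\beta_1+h_2\beta_2|\geq\sqrt{\eta^2+(1-\tau^2)\eta_0^2}-\tau\eta_0\geq\eta$, valid for \emph{any} unimodular extension supplied by Lemma~\ref{l4.7}. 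Note that the route you dismissed as giving only $\sqrt{\eta^2-2\eta_2}$ also becomes correct once the surplus is in place: with $|h_1|^2+|h_2|^2\geq\eta^2+\eta_0^2$ and the intervals chosen so that $|h_1h_2|<\eta_2\leq\eta_0^2/2$ on them, an arbitrary unimodular extension already gives $|h_1\beta_1+h_2\beta_2|^2\geq\eta^2+\eta_0^2-2\eta_2\geq\eta^2$. Either way, the slack must come from compactness, not from a pointwise half-plane constraint.
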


\begin{proof}
    By compactness, we may find $\eta_{0}>0$ such that that $h_1$ and $h_2$ are jointly $(\eta^{2}+\eta_{0}^{2})$-non-degenerate; we also will assume that $2 \eta_{0}^{2}<\eta^{2}$. Next we choose, with a $\tau \in[0,1]$ that will be fixed later, pairwise disjoint closed intervals $J_{1}, \ldots, J_{k}$ and pairwise disjoint closed intervals $J_{k+1}, \ldots, J_{l}$ such that
    \[ 
    \left\{t\colon |h_{1}(t)| \leqslant \tau\cdot \frac{\eta_{0}}{2}\right\} \subset \bigcup_{j=1}^{k} J_{j} \subset\left\{t\colon |h_{1}(t)|<\tau\cdot \eta_{0}\right\}
    \]
and
    \[ 
    \left\{t\colon |h_{2}(t)| \leqslant \tau\cdot \frac{\eta_{0}}{2}\right\} \subset \bigcup_{j=k+1}^{l} J_{j} \subset\left\{t\colon |h_{2}(t)|<\tau \cdot \eta_{0}\right\}
    \]
    (see Lemma \ref{l4.8}). As a consequence of $2 \eta_{0}^{2}<\eta^{2}$ no $J_{j}$ with $j \leqslant k$ intersects  $J_{j^{\prime}}$ with $j^{\prime}>k\colon$ the family $\left(J_{j}\right)_{j=1}^l$ comprises disjoint intervals.

    We now define $\beta_{1}$ and $\beta_{2}$. The function $\beta_{1}$ is the function constantly equal to one, and $\beta_{2}$ is constructed as follows. On $\left[a_{0}, b_{0}\right] \backslash \bigcup_{j=1}^{l} J_{j}^{o}$ we put
    \[
        \beta_{2}(t):=i \frac{h_{1}(t) \overline{h_{2}(t)}}{\left|h_{1}(t) \overline{h_{2}(t)}\right|}.
    \]
     The values are in $\mathbb{T}$ so that, by the Tietze extension theorem, we may find a $\mathbb{T}$-valued continuous extension to all of $\left[a_{0}, b_{0}\right]$ that will be also denoted by $\beta_{2}$.\smallskip

    We \emph{claim} that $\beta_{1}$ and $\beta_{2}$ have the desired properties. By construction both functions are continuous and they satisfy $\left|\beta_{1}(t)\right|=\left|\beta_{2}(t)\right|=1$ for all $t$. For $t \in$ $\left[a_{0}, b_{0}\right] \backslash \bigcup_{j=1}^{l} J_{j}^{o}$ a simple calculation gives us
    \[ 
    \left|h_{1}(t) \beta_{1}(t)+h_{2}(t) \beta_{2}(t)\right|^{2}=\left|h_{1}(t)\right|^{2}+\left|h_{2}(t)\right|^{2}\geqslant\eta^{2} + \eta_0^{2} .
    \]

    Now fix $t$ in one of the $J_{j}$ with $j \leqslant k$. Then $\left|h_{1}(t)\right|^{2}<\tau^{2} \eta_{0}^{2}$ so that $\left|h_{2}(t)\right| \geqslant \sqrt{\eta^{2}+\left(1-\tau^{2}\right) \eta_{0}^{2}}$. We may then continue our estimation as follows:
    \[ 
        \begin{split}
        \left|h_{1}(t) \beta_{1}(t)+h_{2}(t) \beta_{2}(t)\right| 
        & \geqslant \left|h_{2}(t)\right|-\left|h_{1}(t)\right| \\
        & \geqslant \sqrt{\eta^{2}+\left(1-\tau^{2}\right) \eta_{0}^{2}}-\tau \eta_{0} \\
        & \geqslant \eta.
        \end{split}
    \]
 The last inequality holds if we choose $\tau$ small enough. The argument for $\bigcup_{j=k+1}^{l} J_{j}$ is analogous.
\end{proof}

\begin{lemma}\label{l4.10}
    Let $h_{1}, h_{2}\in C\left[a_{0}, b_{0}\right]$. Suppose that $h_1, h_2$ are jointly non-degenerate. Then for every $\varepsilon>0$ there is a positive $\delta$ such that for every $d\in C\left[a_{0}, b_{0}\right]$ satisfying $\|d\| \leqslant \delta$ there are $z_{1}, z_{2}\in C\left[a_{0}, b_{0}\right]$ such that 
    \begin{itemize}
        \item $\left|z_{1}(t)\right|,\left|z_{2}(t)\right| \leqslant \varepsilon$ and \item $h_{1}(t) z_{1}(t)+h_{2}(t) z_{2}(t)+z_{1}(t) z_{2}(t)=d(t)$ $(t \in\left[a_{0}, b_{0}\right])$.
    \end{itemize}
\end{lemma}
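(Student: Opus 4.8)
The plan is to reduce the two-unknown equation $h_1 z_1 + h_2 z_2 + z_1 z_2 = d$ to the scalar quadratic equation already solved in Lemma~\ref{l4.5} by means of a unimodular ansatz. First I would observe that, since $h_1,h_2$ are jointly non-degenerate, the continuous function $|h_1|^2+|h_2|^2$ is nowhere zero on the compact interval $[a_0,b_0]$, hence bounded below by some $\eta^2>0$; that is, $h_1,h_2$ are jointly $\eta^2$-non-degenerate. I would then invoke the preceding lemma to obtain continuous unimodular functions $\beta_1,\beta_2\colon[a_0,b_0]\to\mathbb T$ with $|h_1\beta_1+h_2\beta_2|\geq\eta$ pointwise.

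The key step is to seek a solution of the special form $z_1=\beta_1\phi$ and $z_2=\beta_2\phi$, where $\phi\in C[a_0,b_0]$ is a single scalar unknown. Substituting and using $|\beta_1|=|\beta_2|=1$ gives
\[
    h_1 z_1 + h_2 z_2 + z_1 z_2 = (h_1\beta_1+h_2\beta_2)\phi + \beta_1\beta_2\,\phi^2 = f\phi + g\phi^2,
\]
where $f:=h_1\beta_1+h_2\beta_2$ and $g:=\beta_1\beta_2$. By construction $|f|\geq\eta$ and $|g|\equiv 1$, so this is precisely the equation treated in Lemma~\ref{l4.5}.

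Applying Lemma~\ref{l4.5} to $f$ and $g$ with the given $\varepsilon$, I would obtain $\delta>0$ (depending only on $\varepsilon$ and $\eta$, hence only on $\varepsilon$ and the fixed pair $h_1,h_2$) such that whenever $\|d\|\leq\delta$ there is $\phi\in C[a_0,b_0]$ with $\|\phi\|\leq\varepsilon$ and $f\phi+g\phi^2=d$. Setting $z_1:=\beta_1\phi$ and $z_2:=\beta_2\phi$, these are continuous, satisfy $|z_1|=|z_2|=|\phi|\leq\varepsilon$, and solve the required equation, completing the argument.

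The substance is carried entirely by the two preceding lemmas: the rotation lemma (whose proof crucially exploits that $\mathbb T$-valued maps defined off finitely many subintervals extend continuously, cf.\ Lemma~\ref{l4.7}, and which has no real-scalar analogue) produces the unimodular $\beta_1,\beta_2$ that diagonalise the problem, while Lemma~\ref{l4.5} supplies the norm-controlled solution of the resulting scalar quadratic. Consequently the only point requiring care is checking that the ansatz $z_i=\beta_i\phi$ indeed matches the hypotheses $|f|\geq\eta$ and $|g|\equiv1$; once the substitution is spotted there is no genuine obstacle, and the essential insight is simply that this particular form of solution collapses the two-variable equation into a one-variable one.
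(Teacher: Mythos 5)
Your proof is correct and follows essentially the same route as the paper: obtain the unimodular functions $\beta_1,\beta_2$ from the preceding lemma, make the ansatz $z_1=\beta_1\phi$, $z_2=\beta_2\phi$, and solve the resulting scalar quadratic $f\phi+g\phi^2=d$ via Lemma~\ref{l4.5}. In fact your choice $g:=\beta_1\beta_2$ (which is genuinely unimodular, as Lemma~\ref{l4.5} requires) corrects what appears to be a typo in the paper, where $g$ is written as $h_1h_2$.
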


\begin{proof}
    Choose $\beta_{1}, \beta_{2}$ as in the preceding lemma and put $f:=h_{1} \beta_{1}+h_{2} \beta_{2}$ and $g:=$ $\beta_{1} \beta_{2}$. Then $|f|\geqslant \eta$ for some $\eta > 0$  and $|g|=1$. We conclude the proof by showing that for every $\varepsilon>0$ there is $\delta>0$ such that if $d\in C[a_{0}, b_{0}]$ and $\|d\| \leqslant \delta$ there is $\phi \in C[a_{0}, b_{0}]$ with $\|\phi\| \leqslant \varepsilon$ and 
    \[
        f(t) \phi(t)+g(t) \phi^{2}(t)=d(t) \quad(t\in \left[a_{0}, b_{0}\right]).
    \]
    Indeed, then we can set $z_{1}:=\beta_{1} \phi$ and $z_{2}:=\beta_{2} \phi$.

    Fix $(\alpha, \beta, \gamma) \in \mathbb{C}^{3}$ satisfying $|\beta| \geqslant \eta,$ $|\gamma|=1$ and arbitrary, strictly positive $\eta, \varepsilon.$ 
    It is enough to find $\delta>0$ such that if  $|\alpha| \leqslant \delta$ then $(\alpha, \beta, \gamma) \in \Delta$ and $|Z(\alpha, \beta, \gamma)| \leqslant \varepsilon$. Indeed, by Remark \ref{rm2}, this allows us to define function $\phi$ as  $\phi(t) :=  Z(-d(t), f(t), g(t))$ for $t\in [a_0,b_0].$
    
    Denote by $z_{1}, z_{2}$ the roots of the polynomial $\gamma z^{2}+\beta z+\alpha$. By Vieta's formulae
    \[
        \gamma\left(z_{1}+z_{2}\right)=-\beta,
    \] 
    hence either $\left|z_{1}\right| \geqslant \eta / 2$ or $\left|z_{2}\right| \geqslant \eta / 2$. Without loss of generality we may assume that $\left|z_{1}\right| \geqslant \eta / 2.$

    Again, by Vieta's formulae, 
    \[
        \gamma z_{1} z_{2}=\alpha,
    \] 
    so that $z_{2}=\alpha/(\gamma z_1)$, hence $\left|z_{2}\right| \leq$ $2|\alpha| / \eta$. Thus, it suffices to choose $|\alpha| \leqslant \delta$ where $\delta>0$ satisfies $2 \delta / \eta \leqslant \varepsilon$ and $2 \delta / \eta<\eta / 2$. Then $\left|z_{2}\right|<\left|z_{1}\right|$ and $\left|z_{2}\right| \leqslant \varepsilon,$ so conclusion follows by the definition of $Z.$    
\end{proof}

\begin{lemma}\label{l4.12}
    Let $\varepsilon > 0$ and $\psi\in C[a, b]$. Suppose that $\inf_{t \in[a, b]} |\psi(t)| \leqslant \varepsilon^{2}$. If there are $Z_{a}, W_{a}, Z_{b}, W_{b} \in \mathbb{C}$ such that $Z_{a} W_{a}=\psi(a), Z_{b} W_{b}=\psi(b)$ and $\left|Z_{a}\right|,\left|W_{a}\right|,\left|Z_{b}\right|,\left|W_{b}\right| \leqslant \varepsilon$, then there are  $Z_{1}, Z_{2}\in C[a, b]$ with the following properties:

    \begin{itemize}
    \item $Z_{1}(a)=Z_{a}, Z_{2}(a)=W_{a}, Z_{1}(b)=Z_{b}, Z_{2}(b)=W_{b},$ 
    \item $\left|Z_{1}(t)\right|,\left|Z_{2}(t)\right| \leqslant \varepsilon \text{ and } Z_{1}(t) Z_{2}(t)=\psi(t) \text{ for all } t.$
    \end{itemize}
\end{lemma}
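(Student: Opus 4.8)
The plan is to deduce this from the preceding lemma by cutting $[a,b]$ at a point where $\psi$ is small and gluing two half-interval factorizations. Observe first that any factorization of the sought type satisfies $|\psi(t)|=|Z_1(t)||Z_2(t)|\leqslant\varepsilon^2$ at every $t$, so the relevant regime is $\|\psi\|\leqslant\varepsilon^2$; this is also exactly what the preceding lemma requires on each piece, so I shall work under that bound. By compactness the infimum $\inf_t|\psi(t)|$ is attained at some $c\in[a,b]$, and by hypothesis $|\psi(c)|\leqslant\varepsilon^2$; hence there is $\hat Z\in\mathbb{C}$ with $\hat Z^2=\psi(c)$ and $|\hat Z|=\sqrt{|\psi(c)|}\leqslant\varepsilon$. (Should $c$ happen to be an endpoint, I would instead pick any interior point, which is admissible since $|\psi|\leqslant\varepsilon^2$ throughout.)

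With such an interior $c$ fixed, I would split $[a,b]=[a,c]\cup[c,b]$ and apply the preceding lemma on each half. On $[a,c]$ I use the prescribed pair $Z_a,W_a$ at the left endpoint together with the square root $\hat Z$ at $c$, obtaining continuous $Z_1^-,Z_2^-$ on $[a,c]$ with $Z_1^-(a)=Z_a$, $Z_2^-(a)=W_a$, $Z_1^-(c)=Z_2^-(c)=\hat Z$, both bounded in modulus by $\varepsilon$, and $Z_1^-Z_2^-=\psi$ there. On $[c,b]$ I invoke the symmetric form of the lemma, feeding the prescribed pair $Z_b,W_b$ at $b$ and the square root $\hat Z$ at $c$; this yields $Z_1^+,Z_2^+$ with the analogous properties, in particular $Z_1^+(c)=Z_2^+(c)=\hat Z$.

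Finally I would glue: set $Z_1:=Z_1^-$ on $[a,c]$ and $Z_1:=Z_1^+$ on $[c,b]$, and define $Z_2$ likewise. Since both branches take the common value $\hat Z$ at $c$ (for each of the two factors), $Z_1$ and $Z_2$ are continuous across the junction, hence continuous on $[a,b]$; the modulus bounds, the identity $Z_1Z_2=\psi$, and the four endpoint conditions all hold on each half and therefore globally. The only real subtlety is the junction at $c$, and it is precisely the device of collapsing both factors to the single square-root value $\hat Z$ — which has modulus at most $\varepsilon$ because $|\psi(c)|\leqslant\varepsilon^2$ — that renders the two halves continuously compatible; the remaining verifications are routine bookkeeping inherited from the preceding lemma.
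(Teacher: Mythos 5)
Your argument is correct and is essentially the paper's own proof: both split $[a,b]$ at an interior point $c$, choose a square root $\hat Z$ of $\psi(c)$ (of modulus at most $\varepsilon$ since $\|\psi\|\leqslant\varepsilon^2$), apply the preceding lemma and its symmetric form on the two halves with $Z_1(c)=Z_2(c)=\hat Z$, and glue. Your remark that the stated hypothesis must in effect be $\|\psi\|\leqslant\varepsilon^2$ (the ``$\inf$'' being forced up to a $\sup$ by the desired conclusion) is a sensible reading of what the paper intends, and your observation that any interior cut point works matches the paper's ``choose any $b'$ between $a$ and $b$''.
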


\begin{proof}
    Let $b^{\prime} \in (a,b)$ and take $\hat{Z}$ with $\hat{Z}^{2}=\psi\left(b^{\prime}\right)$. We will define the sought functions $Z_{1}$ and $Z_{2}$ on the interval $\left[a, b^{\prime}\right]$. In the case of $\left[b^{\prime}, b\right]$, we simply repeat the procedure and glue $Z_{1}, Z_{2}$ together as defined on these subintervals.

    Without loss of generality we may suppose that $\left|Z_{a}\right| \geqslant \left|W_{a}\right|$ so that $\left|Z_{a}\right| \geqslant \sqrt{|\psi(a)|}$. Choose $Z_{1}\in [a, b']$ with 
    \begin{itemize}
        \item $Z_{1}(a)=Z_{a}$,
        \item $Z_{1}(b')=\hat{Z}$, and 
        \item $\varepsilon > \left|Z_{1}(t)\right| \geqslant \sqrt{|\psi(t)|}$ for all $t$.
    \end{itemize}
    Let us observe that $|\hat{Z}| \geqslant \sqrt{|\psi(b)|}$\footnote{Here it is important that we work in $\mathbb{C}$ and not in $\mathbb{R}$.}. We may then define
        \[   
            Z_{2}(t):= 
                 \begin{cases}
                   0 &\quad\text{if } Z_{1}(t)=0, \\
                   \psi(t) / Z_{1}(t)&\quad\text{otherwise} .
                 \end{cases}
        \]
    Then $Z_{1}$ and $Z_{2}$ will have the claimed properties. Indeed, the continuity of $Z_{2}$ at points $t_{0}$ with $Z_{1}\left(t_{0}\right)=0$ is proved as follows. If $Z_{1}\left(t_{0}\right)=0$ then $\psi\left(t_{0}\right)=0$. Thus, by continuity of $\psi,$ if $t_{n} \rightarrow t_{0}$, then $\sqrt{\left|\psi\left(t_{n}\right)\right|} \rightarrow 0.$ Hence $\left|Z_{2}\left(t_{n}\right)\right|=\left|\psi\left(t_{n}\right) / Z_{1}\left(t_{n}\right)\right| \leqslant \sqrt{\left|\psi\left(t_{n}\right)\right|}$ will tend to zero as well.
    
\end{proof}

\begin{proof}[Proof of Theorem \ref{th4.1}]
    Let $\varepsilon_{0}>0$. We have to find $\delta_{0}>0$ with the following property: whenever $d\colon[0,1] \rightarrow \mathbb{C}$ is a prescribed continuous function with $\|d\| \leqslant \delta_{0}$ it is possible to find functions $d_{1}, d_{2} \in C[0,1]$ with $\left\|d_{1}\right\|,\left\|d_{2}\right\| \leqslant \varepsilon_{0}$ and $\left(f+d_{1}\right)\left(g+d_{2}\right)=f g+d$ (\emph{i.e.}, $\left.f d_{2}+g d_{1}+d_{1} d_{2}=d\right)$ for any $f, g \in C[0,1]$. Fix $f, g \in C[0,1].$\smallskip

    The idea is to determine such $d_{1}, d_{2}$ by using Lemma \ref{l4.10} (Lemma \ref{l4.12}, respectively) on the subintervals where the functions $f$ and $g$ are jointly non-degenerate (respectively, jointly degenerate) and to glue the pieces together.\smallskip

    With an $\varepsilon_{1}>0$ that will be fixed later we apply Lemma \ref{l4.8} with $h:=|f|^{2}+|g|^{2}$ and $\eta_{1}:=\varepsilon_{1}^{2}, \eta_{2}:=4 \varepsilon_{1}^{2}$. Write the intervals $J_{j}$ $(j=1, \ldots, k)$ as $J_{j}=\left[a_{j}, b_{j}\right]$, where, without loss of generality, $0 \leqslant a_{1}<b_{1}<a_{2}<b_{2}<\cdots<a_{k}<b_{k}$. Note that $h(t) \leqslant 4 \varepsilon_{1}^{2}$ on each $\left[a_{j}, b_{j}\right]$ and $h(t)>\varepsilon_{1}^{2}$ on the intervals $\left[b_{j}, a_{j+1}\right]$.\smallskip

    Let us consider the intervals $\left[b_{j}, a_{j+1}\right]$ and apply Lemma \ref{l4.10} with $\left[a_{0}, b_{0}\right]:=$ $\left[b_{j}, a_{j+1}\right]$, $\eta:=\varepsilon_{1}$, and $\varepsilon:=\varepsilon_{1}$. Choose $\delta$ as in the lemma; without loss of generality we may assume that $\delta \leqslant \varepsilon_{1}^{2}$. We consider any $d \in C[0,1]$ with $\|d\| \leqslant \delta$. Lemma \ref{l4.10} provides continuous $z_{1}, z_{2}\colon \left[b_{j}, a_{j+1}\right] \rightarrow \mathbb{C}$ with $f(t) z_{1}(t)+g(t) z_{2}(t) + z_1z_2 = d(t)$ and $\left|z_{1}(t)\right|,\left|z_{2}(t)\right| \leqslant \varepsilon_{1}$ for $t \in\left[b_{j}, a_{j+1}\right]$. We define $d_{1}$ ($d_{2}$, respectively ) on $\left[b_{j}, a_{j+1}\right]$ by $z_{2}$ ($z_{1}$, respectively). Then $\left(f+d_{1}\right)\left(g+d_{2}\right)=f g+d$ on these subintervals. (It should be noted here that the $\delta$ in Lemma \ref{l4.10} does only depend on $\eta$ and $\varepsilon$ but not on $a_{0}, b_{0}$.)\smallskip

    Now $d_{1}, d_{2}$ are suitably defined on the union of the $\left[b_{j}, a_{j+1}\right]$. The gaps will be filled with the help of Lemma \ref{l4.12}. Consider any $\left[a_{j}, b_{j}\right]$. For a $t$ in such an interval\\
    we know that $|f(t)|,|g(t)| \leqslant 2 \varepsilon_{1}$ so that $|f(t) g(t)| \leqslant 4 \varepsilon_{1}^{2}$.

    It follows that $\psi\colon \left[a_{j}, b_{j}\right] \rightarrow \mathbb{C}, t \mapsto f(t) g(t)+d(t)$ satisfies $|\psi(t)| \leqslant 5 \varepsilon_{1}^{2} \leqslant  (5 \varepsilon_{1})^{2}$. We apply Lemma \ref{l4.12} with this function $\psi$ and
    \[ Z_{a}:=\left(f+d_{1}\right)\left(a_{j}\right), W_{a}:=\left(g+d_{2}\right)\left(a_{j}\right), Z_{b}:=\left(f+d_{1}\right)\left(b_{j}\right), W_{b}:=\left(g+d_{2}\right)\left(b_{j}\right)
    \]
    and $\varepsilon:=5 \varepsilon_{1}$. It remains to use the functions $Z_{1}, Z_{2}$ found by the lemma to define $d_{1}, d_{2}$ on $\left[a_{j}, b_{j}\right]$. Here $Z_{1}$ (respectively $Z_{2}$) plays the r\^{o}le of $f+d_{1}$ ($g+d_{2}$) so that we may set $d_{1}(t):=Z_{1}(t)-f(t)$ and $d_{2}(t):=Z_{2}(t)-g(t)$ for $t \in\left[a_{j}, b_{j}\right]$. At the endpoints this assignment is compatible with the previous one: at $a_{j}$, e.g., $d_{1}$ was already defined, but as a consequence of $Z_{1}(a)=Z_{a}=f\left(a_{j}\right)+d\left(a_{j}\right)$ the new definition of $d_{1}\left(a_{j}\right)$ as $\left(Z_{1}-f\right)\left(a_{j}\right)$ leads to the same value.

    We observe that $\left|d_{j}(t)\right| \leqslant (2+5)\varepsilon_{1} = 7 \varepsilon_{1}$ for $j=1,2$ so that we may summarise the above calculations as follows: if one starts with $\varepsilon_{1}:=\varepsilon_{0} / 7$, then $\delta_{0}:=\delta$ with the $\delta$ that we have just found has the desired properties.\smallskip

    It should be noted that our proof is not yet complete since when considering the $\left[a_{j}, b_{j}\right]$, our argument used the fact that the functions $d_{1}, d_{2}$ were already defined at $a_{j}$ and $b_{j}$, so we are to consider the cases $a_{1}=0$ or $b_{k}=1$. If, \emph{e.g.}, $a_{1}=0$ we choose any $Z_{a}, W_{a}$ with $\left|Z_{a}\right|,\left|W_{a}\right| \leqslant \varepsilon$ and $Z_{a} W_{a}=\psi(a)$; we proceed similarly for $b_k=1$.
\end{proof}

\subsection{Uniform openness of multiplication in \texorpdfstring{$C(X)$}{TEXT}}
The next result is crucial for establishing the only non-trivial implication in Theorem~\ref{thm:c}.
\begin{theorem}
    Let $X$ be a compact space of covering dimension at most $1$. Then multiplication in $C(X)$ is uniformly open.
\end{theorem}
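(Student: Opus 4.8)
The plan is to prove uniform openness in three stages of increasing generality, while keeping careful track of the \emph{modulus} $\delta(\varepsilon)$ so that it never depends on the particular space: this equi-uniformity is precisely what will let the modulus survive the two inverse-limit passages below. First the finite-graph case (upgrading Theorem~\ref{th4.1}), then compact metric spaces of dimension at most $1$ via an inverse-limit-of-graphs representation, and finally arbitrary compact spaces of dimension at most $1$ via Marde\v{s}i\'c's theorem.

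\textbf{Stage 1 (finite graphs).} I would upgrade Theorem~\ref{th4.1} from $[0,1]$ to an arbitrary finite topological graph $\Gamma$ (a finite one-dimensional polyhedron), with a modulus independent of $\Gamma$. Realising $\Gamma$ as a finite union of edges, each a copy of $[0,1]$, meeting at a finite vertex set, and given $f,g\in C(\Gamma)$ and $\varepsilon_0>0$, I would refine the partition supplied by Lemma~\ref{l4.8} (applied to $h=|f|^2+|g|^2$) on every edge so that each vertex is a partition point, and then run the construction of Theorem~\ref{th4.1} edge by edge. The only new point is continuity at the vertices, which I would arrange by fixing the vertex data consistently across incident edges before filling in the edges. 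At a jointly non-degenerate vertex $v$ the perturbations produced on an edge are pointwise determined by $f,g,d$ together with the value at $v$ of the auxiliary unimodular function of Lemma~\ref{l4.6}; wherever that function is pinned down by its defining formula $i\,f\overline{g}/|f\overline{g}|$ (that is, away from the zeros of $f$ and $g$) it is already edge-independent, while near a zero of $f$ or $g$ I would use the freedom in the extension Lemma~\ref{l4.7} to select one common unimodular value at $v$; either way $d_1(v),d_2(v)$ then agree on all edges meeting $v$. At a jointly degenerate vertex $v$ I would instead first choose scalars $Z_1(v),Z_2(v)$ with $Z_1(v)Z_2(v)=(fg+d)(v)$ and $|Z_1(v)-f(v)|,|Z_2(v)-g(v)|\leq\varepsilon_0$, using the uniform openness of multiplication on $\mathbb{C}$, and feed them as prescribed boundary values into Lemma~\ref{l4.12} on each incident degenerate subinterval. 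Since all edge-level objects are then pointwise determined by $f,g,d$ and the now-consistent vertex data, the pieces glue to continuous functions on $\Gamma$, and the modulus is exactly the one produced in Theorem~\ref{th4.1}, hence independent of $\Gamma$.

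\textbf{Stages 2 and 3 (inverse limits).} Both remaining stages pass from a class of spaces to its inverse limits. For a compact \emph{metric} space $X$ with $\dim X\leq 1$, the Freudenthal expansion theorem (see \cite{engelking1978dimension}) represents $X=\varprojlim_n\Gamma_n$ as an inverse limit of finite graphs with surjective bonding maps, whence $C(X)=\varinjlim_n C(\Gamma_n)$, the coordinate projections inducing isometric unital $*$-embeddings of the $C(\Gamma_n)$ into $C(X)$ whose union is dense by Stone--Weierstrass. Since the $C(\Gamma_n)$ are equi-uniformly open by Stage~1, the permanence of equi-uniform openness under such direct limits (the direct-limit argument of \cite[Proposition~4.16]{DraKa}) yields uniform openness of $C(X)$ with a modulus depending only on the common modulus of Stage~1; in particular the algebras $C(X)$ are equi-uniformly open over all compact metric $X$ of dimension at most $1$. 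For a general compact space $X$ of covering dimension at most $1$, Marde\v{s}i\'c's theorem \cite{mardevsic1960covering} represents $X=\varprojlim_\alpha X_\alpha$ as an inverse limit of compact metric spaces of dimension at most $1$; since $C(X)=\varinjlim_\alpha C(X_\alpha)$ and the $C(X_\alpha)$ are equi-uniformly open by the metric case, the same direct-limit permanence gives uniform openness of $C(X)$ and completes the proof.

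I expect the main obstacle to be Stage~1: the book-keeping needed to coordinate the auxiliary unimodular functions and the prescribed scalar vertex values so that the edge-wise constructions glue continuously \emph{without any dependence of the modulus on the combinatorics of $\Gamma$}. This graph-independence is the whole point, since the successive-approximation schemes underlying the two direct-limit passages converge only because one and the same modulus works simultaneously for every graph, and hence for every space occurring in the relevant inverse systems.
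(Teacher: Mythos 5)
Your proposal is correct and follows essentially the same three-stage route as the paper: an edge-by-edge extension of Theorem~\ref{th4.1} to finite graphs with a graph-independent modulus, then Freudenthal's expansion theorem plus permanence of equi-uniform openness under C*-direct limits for compact metric $X$, and finally Marde\v{s}i\'c's theorem for the general case. Your Stage~1 is in fact more explicit than the paper's about how the unimodular auxiliary functions and the prescribed vertex data are coordinated across incident edges, a point the paper treats only briefly.
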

\begin{proof}
    \emph{Case 1:} $X$ is a topological realisation of a graph in the complex plane. \smallskip
    
    We \emph{claim} that $C(X)$ has uniformly open multiplication and $\delta(\varepsilon)$ does not depend on $X$ in the class of such graphs, that is, multiplications in $C(X)$ are equi-uniformly open for all graphs $X$.\smallskip
    
    For this, let us consider a partition of $X$ into finitely many intervals, $\bigcup_{j=1}^{k} [a_j, b_j]$. We define a finer partition of this graph into intervals as follows. If the intervals $[a_j, b_j]$ and $[a_i, b_i]$ intersect at $c$ for some $j,i \in \{1,\ldots k\}$, then $c$ must be the endpoint of the intervals, \emph{i.e}., we replace the interval $[a_j, b_j]$ by sub-intervals $[a_j, c]$ and $[c, b_j]$ whenever $c\in (a_j,b_j)$ (analogously for the interval $[a_i, b_i]$). For each interval in the new partition $P=\bigcup_{j=1}^{K} [a_j, b_j]$ we apply a procedure analogous to the one in the proof of Theorem \ref{th4.1}. \smallskip
    
    More precisely, denote for any function $F\colon P\rightarrow \mathbb{C}$ its restriction to the interval $[a_j,b_j]$ by $F^j.$ Then for $\varepsilon_{0}>0$ find a positive $\delta_{0}$ with the following property: whenever $d\in C(P)$, $\|d\| \leqslant \delta_{0}$ for every restriction $d^j\in C[a_j, b_j]$ ($j \in \{1,\ldots,K\}$) we may find $d_{1}^j, d_{2}^j \in C[a_j, b_j]$ with $\|d_{1}^j\|,\|d_{2}^j\| \leqslant \varepsilon_{0}$ and $(f^j+d_{1}^j)(g^j+d_{2}^j)=f^j g^j+d^j$ for any $f,g\colon P \rightarrow \mathbb{C}.$ 
    
    We glue the functions $d_1^j$ for all $j \in \{1,\ldots,K\}$ to obtain a function $d_1\colon P\rightarrow \mathbb{C}$ (analogously, we get $d_2$). Note that due to the choice of partition $P$, these functions are uniquely defined at the endpoints of the intervals, because at the intersection points of the intervals, we always take the same value of the function. It should be noted also (again) that the $\delta$ in Lemma \ref{l4.10} does only depend on $\eta$ and $\varepsilon$ and not on $a_{0}, b_{0}$.\medskip

    \noindent \emph{Case 2}: $X$ is a compact metric space of covering dimension at most 1. \smallskip 
    
    It is known that for a~zero-dimensional (not necessarily metrisable) compact space $X$, $C(X)$ has uniformly open multiplication with $\delta(\varepsilon) = \varepsilon^2 / 4$ (\cite[Proposition 4.6]{DraKa}). In the light of Case 1, by taking minimum if necessary, we may suppose that $\delta(\varepsilon)$ is the same for all zero-dimensional spaces as well as all graphs in the plane. However, every one-dimensional compact metric space $X$ is the projective limit of an inverse sequence $(K_i, \pi_i^j)$ of at most one-dimensional `polyhedra' (this is a theorem of Freudenthal \cite{freudenthal1937entwicklungen}; see \cite[Theorem 1.13.2]{engelking1978dimension} for modern exposition), \emph{i.e.}, finite sets and graphs in the plane. Such an inverse sequence gives rise to a direct system $(C(K_i), h_{\pi_i^j})$, where $h_{\pi_i^j}$ is a *-homomorphic embedding of $C(K_i)$ into $C(K_j)$ ($i\leqslant j$) given by 
    \[
        h_{\pi_i^j}f = f\circ \pi_i^j\quad (f\in C(K_i)).
    \]
    As $C(X)$ is naturally *-isomorphic to the completion of the chain $(C(K_i), h_{\pi_i^j})$ (\emph{i.e.}, the C*-direct limit; see \cite[Section 1]{takeda1955inductive} for more details) in which multiplications are equi-uniformly open, by \cite[Corollary 3.6]{DraKa}, $C(X)$ has uniformly open multiplication and $\delta(\varepsilon)$ depends only on $\varepsilon$ but not the compact metric space $X$ considered.\medskip

    \noindent\emph{Case 3}: $X$ is an arbitrary compact space of covering dimension at most 1.\smallskip

    By \cite[Theorem 1]{mardevsic1960covering} every compact space $X$ is an inverse limit of a well-ordered system of metrisable compacta $X_{\alpha}$ with $\dim X_{\alpha} \leqslant \dim X.$ As proved in Claim 2, $C(X_\alpha)$ have equi-uniformly open multiplications, meaning that $\delta(\varepsilon)$ is the same for all items of the inverse system considered, so multiplication in $C(X)$ is uniformly open (\cite[Corollary 3.6]{DraKa}).
 
\end{proof}

\section{Open problems}In the light of Theorem~\ref{thm:A} let us pose the following question.
\begin{question}
    What are further examples of (dual) Banach algebras that are approximable by jointly non-degenerate elements? What about algebras of Lipschitz functions on zero-dimensional compact spaces?
\end{question}
In the case of convolution algebras on discrete groups having at most one-dimensional dual groups, we ask the following question.
\begin{question}
    Can the group algebra of a group with bounded exponent have (uniformly) open convolution?
\end{question}
More generally:
\begin{question}
    Is there an infinite group $G$ for which $\ell_1(G)$ has open convolution?
\end{question}

\subsection*{Acknowledgements} Support from SONATA 15 No.~2019/35/D/ST1/01734 is acknowledged with thanks. The second-named author was supported by GAČR grant GF20-22230L and received an incentive scholarship from the funds of the program Excellence Initiative - Research University at the Jagiellonian University in Krak\'{o}w. We are indebted to Lav Kumar Singh for careful reading of the manuscript. Finally, we are grateful to the referee for their insightful remarks.

\bibliography{literature.bib}
\bibliographystyle{plain}

\end{document}